\newif\iffund
\newtheorem{theorem}{Theorem}[section]
\newtheorem{corollary}{Corollary}[section]
\newtheorem{proposition}{Proposition}[section]
\theoremstyle{remark}
\newtheorem{remark}{Remark}[section]
\newtheorem{example}{Example}[section]
\theoremstyle{definition}
\newtheorem{definition}{Definition}[section]
\newcommand{\RR}{\mathbb{R}}
\definecolor{darkgreen}{rgb}{0,0.4,0}
\definecolor{MyBlue}{rgb}{0,0.08,0.7} 
\definecolor{MyRed}{rgb}{0.85,0.08,0}
\newcommand{\toblue}{{\, \color{MyBlue}\to}\,}
\renewcommand{\toblue}{\to}
\newcommand\trans{T}
\DeclareMathOperator{\var}{Var}
\DeclareMathOperator{\tr}{tr}
\DeclareMathOperator{\diag}{diag}
\DeclareMathOperator{\pa}{pa}
\DeclareMathOperator{\ch}{ch}
\DeclareMathOperator{\de}{de}
\DeclareMathOperator{\an}{an}
\DeclareMathOperator{\ci}{\perp\kern-1.3ex\perp}
\DeclareMathOperator{\nci}{\not\kern-0.3ex\ci}
\DeclareMathOperator*{\argmin}{argmin}
\renewcommand*\env@matrix[1][\arraystretch]{%
  \edef\arraystretch{#1}%
  \hskip -\arraycolsep
  \let\@ifnextchar\new@ifnextchar
  \array{*\c@MaxMatrixCols c}}
\providecommand{\keywords}[1]
{
  \small	
  \textbf{\textit{Keywords:}} #1
}
\title{Partial Homoscedasticity in Causal Discovery with Linear Models}
\author[]{Jun Wu, Mathias Drton}
\affil[]{\normalsize Technical University of Munich, Germany; \\
TUM School of Computation, Information and Technology, \\
Department of Mathematics and Munich Data Science Institute}
\date{}
\begin{document}

\maketitle

\begin{abstract}
Recursive linear structural equation models and the associated directed acyclic graphs (DAGs) play an important role in causal discovery.  The classic identifiability result for this class of models states that when only observational data is available, each DAG can be identified only up to a Markov equivalence class.  In contrast, recent work has shown that the DAG can be uniquely identified if the errors in the model are homoscedastic, i.e., all have the same variance.  This equal variance assumption yields methods that, if appropriate, are highly scalable and also sheds light on fundamental information-theoretic limits and optimality in causal discovery.  In this paper, we fill the gap that exists between the two previously considered cases, which assume the error variances to be either arbitrary or all equal. Specifically, we formulate a framework of partial homoscedasticity, in which the variables are partitioned into blocks and each block shares the same error variance.  
For any such groupwise equal variances assumption, we characterize when two DAGs give rise to identical Gaussian linear structural equation models.  Furthermore, we show how the resulting distributional equivalence classes may be represented using a completed partially directed acyclic graph (CPDAG), and we give an algorithm to efficiently construct this CPDAG.  In a simulation study, we demonstrate that greedy search provides an effective way to learn the CPDAG and exploit partial knowledge about homoscedasticity of errors in structural equation models.
\end{abstract}

\keywords{Causal discovery, covariance matrix, equal variance, graphical model, structural equation model}

\section{Introduction}
\label{sec:intro}

A structural equation model (SEM) describes the stochastic dependence among a group of random variables in terms of noisy functional relationships between causes and their effects.  
In their interpretation as causal models, SEMs furnish models of the variables' joint distribution not only in observational studies but also under experimental interventions, providing a rigorous definition of the causal effect of an intervention \citep{pearl:2009,spirtes:2000}.  In this realm, the problem of causal discovery, also known as structure learning, is the problem of learning a causal model from available data by determining the direct causes of each variable with the help of statistical methods \citep{drton:maathuis:2017}.  For this purpose, it is convenient to take a point of view of probabilistic graphical modeling and represent each SEM with the help of a directed graph, with vertices corresponding to the random variables at hand, and directed edges linking the functionally related variables \citep{handbook}.  We tacitly assume all considered SEMs to be recursive, i.e., the underlying graph is a directed acyclic graph (DAG).

Each DAG uniquely encodes a causal model, but when only observational data are available, different DAGs (i.e., different systems of structural equations) may be equivalent in the sense of defining the same statistical model for the observations.   The target of inference then
becomes an equivalence class of empirically indistinguishable SEMs.  Hallmark theory for graphical models provide a characterization of SEMs in terms of conditional independence, show how this characterization leads to efficient decision criteria for the observational equivalence of two SEMs given by two different DAGs, and finally develop a graphical representation of the resulting Markov equivalence class by means of a completed partially directed acyclic graph (CPDAG).  This theory is summarized, for instance, in \citet{handbook:studeny}.  

The theory just mentioned holds for fully nonparametric models.  Somewhat amazingly, the same arguments and results also apply to the widely considered special case of linear SEMs with Gaussian errors with arbitrary variances, where again only a Markov equivalence class may be inferred from observational data; see, e.g., the review of \citet{drton:algebraic}.  However, restricted non-Gaussian models behave differently and typically lead to a setting where every DAG defines a unique model for observational data.   This has been shown, for example, for linear models with non-Gaussian errors \citep{Shimizu06alinear,lingam:book} and nonlinear additive noise models \citep{NIPS2008_f7664060,Peters11identifiabilityof}.  Similarly, additional assumptions may render linear Gaussian SEMs identifiable.  The most prominent example in this direction are linear SEMs with Gaussian errors that all share a common variance \citep{Peters2014biometrika}.  Despite its restrictive nature, the equal variance setting plays a useful role as a test bed for developing scalable causal discovery methods \citep{ghoshal2017learning,chen2018causal,pmlr-v84-ghoshal18a,gao2020polynomial} and deriving fundamental information-theoretic limits and optimality results \citep{pmlr-v151-gao22a}; for further extensions see \citet{park2020gauss,park2020add,AMP}.
%

In this paper, we consider the 
realm of linear Gaussian SEMs that fall between the classical case with arbitrary error variances and the case with all error variances equal.  
To this end we formulate a novel framework:  partial homoscedasticity of the errors.  Specifically, we encode the modeling assumptions in a partition over the variables, with the interpretation that the errors associated to the variables in the same partition block have equal variances.  In this framework, the minimal and maximal partitions recover the previously studied cases.  The minimal partition in which every variable forms a block of size one recovers the classical case of arbitrary error variances.  The maximal partition in which all variables are in one block gives the equal variance case.  

Our results first provide an implicit description of partially homoscedastic linear Gaussian SEMs, with a partition specifying groupwise equal variances.  This description is based on conditional independence constraints as well as constraints we deduce from the equalities of error variances.  We then characterize when two DAGs define the same partially homoscedastic linear Gaussian SEM.  This characterization shows that because of the equal variance constraints, the existence of a pair of variables in the same block of the considered partition determines the orientation of the edges that have these two nodes as endpoints.  Moreover, we generalize the concept of a CPDAG to the partially homoscedastic case and provide an algorithm for efficient construction of the CPDAG.  This algorithm is an adjusted version of the general algorithm for constructing an equivalence class under background knowledge \citep{meek1995causal}.

The remainder of this paper is organized as follows: In Section \ref{sec_bg}, we introduce basic notation and necessary background for linear SEMs and their representation using DAGs. In Section \ref{sec_ghe}, we discuss the partially homoscedastic setup with groupwise equal error variances and derive the equal variance constraints that are needed in an implicit description of the models.   In Section \ref{sec_eq}, we characterize the equivalence classes in our setup and give an algorithm to construct the CPDAG. In Section \ref{sec_gs} we propose a greedy search scheme for model selection based on information criteria, which is seen to be effective in a simulation study.  We conclude with a discussion in Section \ref{sec_discuss}.

\section{Background}\label{sec_bg}
\subsection{Linear structural equation models}

A linear structural equation model (SEM) is specified via a linear system consisting of equations among variables $\{X_i: i\in V\}$ and random errors $\{\varepsilon_i: i\in V\}$, where $V$ is an index set of size $|V|=p$.  In terms of the random vectors ${X}:=(X_i)_{i\in V}$ and ${\epsilon}:=(\varepsilon_i)_{i\in V}$, the linear system can be written as
\begin{align}\label{sem}
{X}=\Lambda^T {X}+{\epsilon},
\end{align}
where $\Lambda=(\lambda_{ij})\in\RR^{V\times V}$ is a matrix of coefficients representing the causal structure among the variables.
Assuming that the matrix $I-\Lambda$ is invertible ($I$ is the identity matrix), the linear equation system \eqref{sem} is solved uniquely by $X=(I-\Lambda)^{-T}\epsilon$, with covariance matrix 
\begin{align}\label{cov}
  \var[X] = (I - \Lambda)^{-\trans} \Omega (I -
  \Lambda)^{-1}, 
\end{align}
where $\Omega$ is the covariance matrix of the random vector of stochastic errors $\epsilon$. We assume that the errors are independent such that $\Omega=\diag(\boldsymbol{\omega})$ is a diagonal matrix with all diagonal entries positive.  Any specific SEM makes the assumption that a subset of the entries of $\Lambda$ is zero.  Such an assumption is naturally encoded in a directed graph.  We will detail the connection between a linear SEM and its directed graph in Section \ref{sec:2.3}.

\subsection{Graph terminology}
\label{subsec:graph_terminology}

A directed graph $G=(V,E)$ is a tuple that pairs a set of nodes $V$ with a set of edges $E\in V\times V$. The elements in $E$ are ordered pairs of the form $(i,j)$, $i\neq j$, encoding the edge $i\rightarrow j$ in the graph. We say that the edge $(i,j)$ is an outgoing edge from the arrow tail $i$ and an incoming edge to the arrow head $j$. The node $i$ is a \textit{parent} of $j$ and the node $j$ is a \textit{child} of $i$. We denote the sets of all parents and children of a node $i$ by $\pa(i)$ and $\ch(i)$, respectively. Similarly, the notation $\an(i)$ denotes the set of \textit{ancestors} of $i$ and $\de(i)$ denotes the set of \textit{descendants} of $i$. For simplicity, we adopt the convention that $i\notin \an(i)$ and $i\in\de(i)$. If $\an(i)\cap\de(i)=\emptyset$, then node $i$ does not lie on any directed cycle.  If $\an(i)\cap \de(i)=\emptyset$ 
for every node $i$, then the graph is a \textit{directed acyclic graph} (DAG). To distinguish those sets in different graphs, we use graph index as the subscript: $\pa_{G}$, etc.

A \textit{collider triple} in a directed graph $G=(V,E)$ is a triple of vertices $(i,j,k)$ such that there are edges between $i$ and $j$ and between $j$ and $k$, with $j$ being a head on both these edges ($i\rightarrow j \leftarrow k$). If there exists an edge between $i$ and $j$, i.e., $(i,j)\in E$ or $(j,i)\in E$, then the middle node $j$ is a \textit{shielded collider} in this collider triple. Otherwise, the node $j$ is an \textit{unshielded collider}. 

In a directed graph $G=(V,E)$, a \textit{path} is an alternating sequence of nodes from $V$ and edges from $E$, such that each edge in the sequence is an edge between the nodes that precede and succeed it.
Note that this definition allows a path to contain a node more than once.
Given a fixed set $S\subseteq V$, two nodes $i,j\notin S$ are \textit{d-connected} by $S$ if $G$ contains a path from $i$ to $j$ that has all colliders in $S$ and all non-colliders outside $S$. If there exists no such path, then $i$ and $j$ are \textit{d-separated} by $S$, denoted as 
$i\perp_d j \mid S$.
A \textit{trek} is a path without collider triples, making its endpoints d-connected given $\emptyset$.  A trek takes the form
\begin{align*}
i_l^{L} \leftarrow \dots \leftarrow i_1^L \leftarrow i_0  \rightarrow i_1^R \rightarrow\dots \rightarrow i_{r}^{R},
\end{align*}
where $i_0$ is called the top node.  This top node is characterized by not being the head of any edge on the trek.
Every trek has a left hand side and a right hand side, corresponding to the nodes with superscript $L$ or $R$. By convention, the top node $i_0$ is included in both the left hand and the right hand side of the trek. 

\subsection{Graphical models, trek rule and conditional independence}\label{sec:2.3}

As noted earlier, any specific linear SEM is obtained by requiring a subset of the entries in the parameter matrix $\Lambda=(\lambda_{ij})$ in \eqref{sem} to vanish.  This requirement may be represented by drawing an edge $i\to j$ whenever the matrix entry $\lambda_{ij}$ is \emph{not} constrained to be zero.

Let $G=(V,E)$ be a DAG. Let $\mathbb{R}^E$ be the set of real $V\times V$-matrices with support in $E$, that is, 
\begin{equation}
  \label{eq:R^E}
      \mathbb{R}^E = \big\{\,\Lambda=(\lambda_{ij}) \in \mathbb{R}^{V
      \times V} : \lambda_{ij} = 0 \ \text{ if } \ i \toblue j \notin E
    \,\big\}.
\end{equation}
Then the DAG $G$ encodes the linear SEM with independent errors whose coefficient matrix is constrained to have a zero pattern given by $\mathbb{R}^E$.  In the resulting linear SEM the covariance matrix is parametrized through the map
\begin{align*}
    \phi_G :  \mathbb{R}^E\times (0,\infty)^V 
    &\mapsto \mathit{PD}, \\
    (\Lambda, \boldsymbol{\omega}) &\mapsto (I - \Lambda)^{-T} \diag(\boldsymbol{\omega}) (I - \Lambda)^{-1},\nonumber
\end{align*}
where $\mathit{PD}$ denotes the {cone of positive definite matrices}.  Note that for a DAG, the matrix $I-\Lambda$ is invertible for all $\Lambda\in\mathbb{R}^E$ because the row and columns of $\Lambda$ can be permuted to bring the matrix in lower-triangular form.

\begin{definition}
The \emph{linear Gaussian model} given by a DAG $G=(V,E)$ is the family of all multivariate normal distributions on $\mathbb{R}^V$ with covariance matrix in the set
$$
M_{G}=\left\{\Sigma: \Sigma=\phi_G(\Lambda,\boldsymbol{\omega}),\ \Lambda\in \mathbb{R}^E ,\  \boldsymbol{\omega}\in (0,\infty)^V \right\}.
$$
\end{definition}

The map $\phi_G$ computes the covariance matrix of the random vector $X$ from the coefficient matrix $\Lambda$ and the vector of error variances $\boldsymbol{\omega}$.  A classical result known as the \emph{trek rule} provides a combinatorial description of the coordinates of $\phi_G$, i.e., of the individual covariances between entries of $X$; see, e.g., Theorem 4.1 in the review of \citet{drton:algebraic}. 

\begin{theorem}[Trek rule]\label{trekrule}
Let $G=(V,E)$ be a DAG, and let $\Lambda\in \mathbb{R}^{E}$ and $\boldsymbol{\omega}\in (0,\infty)^V$. For $i,j\in V$, let $\mathcal{T}(i, j)$ be the set of all treks between $i$ and $j$.  For a trek $\tau$ with top node $i_0$, we define the trek monomial
\begin{align*}
\tau(\Lambda, \boldsymbol{\omega})=\boldsymbol{\omega}_{i_{0}} \prod_{k \rightarrow l \in \tau} \lambda_{k l}. 
\end{align*}
Then the covariance between $X_i$ and $X_j$ equals the sum of all trek monomials for treks between $i$ and $j$, i.e.,
\begin{align*}
\phi_{G}(\Lambda, \boldsymbol{\omega})_{i j}=\sum_{\tau \in \mathcal{T}(i, j)} \tau(\Lambda, \boldsymbol{\omega}), \quad i, j \in V.
\end{align*}
\end{theorem}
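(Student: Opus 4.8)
The plan is to start from the closed-form solution $X=(I-\Lambda)^{-\trans}\epsilon$ and the resulting covariance formula \eqref{cov}, and to turn the matrix inverse into a combinatorial sum via a Neumann series. Because $G$ is a DAG, a topological ordering of $V$ brings $\Lambda$ into strictly triangular form, so $\Lambda$ is nilpotent and $(I-\Lambda)^{-1}=\sum_{m\ge 0}\Lambda^{m}$ is a \emph{finite} sum. The first step is the standard entrywise reading of this series: for all $h,j\in V$,
\begin{align*}
\big((I-\Lambda)^{-1}\big)_{hj}=\sum_{P:\,h\to\cdots\to j}\ \prod_{k\to l\in P}\lambda_{kl},
\end{align*}
where $P$ ranges over the directed paths from $h$ to $j$ in $G$, the empty path (contributing $1$) being included when $h=j$. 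Indeed $(\Lambda^{m})_{hj}=\sum\lambda_{h i_1}\lambda_{i_1 i_2}\cdots\lambda_{i_{m-1} j}$, and a summand is nonzero precisely when $h\to i_1\to\cdots\to j$ is a directed walk in $G$; acyclicity forces any such walk to be a simple path, so only finitely many contribute and no convergence issue arises.

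The second step expands the covariance. Since $\Omega=\diag(\boldsymbol{\omega})$ is diagonal, \eqref{cov} yields
\begin{align*}
\phi_{G}(\Lambda,\boldsymbol{\omega})_{ij}=\sum_{h\in V}\boldsymbol{\omega}_{h}\,\big((I-\Lambda)^{-1}\big)_{hi}\,\big((I-\Lambda)^{-1}\big)_{hj}.
\end{align*}
Inserting the path expansion into each factor rewrites the right-hand side as a sum over triples $(h,P_L,P_R)$ in which $P_L$ is a directed path from $h$ to $i$ and $P_R$ a directed path from $h$ to $j$, with summand $\boldsymbol{\omega}_{h}\prod_{k\to l\in P_L}\lambda_{kl}\prod_{k\to l\in P_R}\lambda_{kl}$.

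The heart of the argument is then a bijection between such triples $(h,P_L,P_R)$ and treks $\tau\in\mathcal{T}(i,j)$. Given a triple, reverse $P_L$ and concatenate it with $P_R$ at $h$ to form the path $i\leftarrow\cdots\leftarrow h\to\cdots\to j$; this path has no collider triple, because every interior vertex of $P_L$ or $P_R$ meets one incoming and one outgoing arrow, while at $h$ both arrows point away, so it is a trek with top node $h$. Conversely, any trek is split at its unique top node --- the only vertex on it that is the head of no incident edge of the trek --- into its left and right directed paths, recovering $(h,P_L,P_R)$. Under this correspondence $i_0=h$, and the edges of $\tau$, listed with multiplicity, are exactly those of $P_L$ together with those of $P_R$, so the summand above equals the trek monomial $\tau(\Lambda,\boldsymbol{\omega})=\boldsymbol{\omega}_{i_0}\prod_{k\to l\in\tau}\lambda_{kl}$. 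Summing over all treks between $i$ and $j$ gives the claimed identity.

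The point I expect to require the most care is the bijection in the last step: one has to verify that the absence of collider triples is precisely equivalent to the path being a concatenation of two directed paths at a common source, that the top node is indeed unique, and that the conventions line up --- the top node belongs to both sides, the trivial trek $i=j=h$ matches the empty pair of paths and the monomial $\boldsymbol{\omega}_i$, and a trek regarded as a path may revisit a vertex (its two sides can overlap) even though each side is separately a simple directed path, which does not affect the decomposition. By contrast, the analytic ingredients --- finiteness of the Neumann series and the entrywise path formula --- are routine consequences of acyclicity.
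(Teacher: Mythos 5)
Your proof is correct. Note that the paper does not prove this statement at all: it is quoted as a classical result with a pointer to Theorem 4.1 of the cited review by Drton, so there is no in-paper argument to compare against. Your route --- nilpotency of $\Lambda$ under a topological order, the finite Neumann series giving the directed-path expansion of $(I-\Lambda)^{-1}$, and the bijection between treks and triples $(h,P_L,P_R)$ of a common source with two directed paths --- is exactly the standard proof found in that reference. You also correctly flag and resolve the two points that are easy to fumble: that ``no collider triple'' forces the arrow pattern $\leftarrow\cdots\leftarrow i_0\rightarrow\cdots\rightarrow$ with a unique top node, and that for $i=j$ (or overlapping sides) the product $\prod_{k\to l\in\tau}\lambda_{kl}$ must be read with edge multiplicities so that the pair $(P,P)$ contributes squared weights, matching $\sum_h \omega_h\bigl((I-\Lambda)^{-1}\bigr)_{hi}\bigl((I-\Lambda)^{-1}\bigr)_{hj}$.
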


SEMs naturally lead to conditional independence constraints and these may be read off from an underlying DAG by inspecting $d$-separation relations, which we recalled in Section~\ref{subsec:graph_terminology}.
Furthermore, in linear Gaussian SEMs, conditional independence corresponds to an algebraic constraint on the covariance matrix of the distribution.  We recall these facts in the following theorem; see \citet[Section 10]{drton:algebraic} or also  \citet[Section 8]{richardson:spirtes:2002} for further discussion and pointers to the original literature.

\begin{theorem}
\label{thm:conditional_independence_dsep}
Let $G=(V,E)$ be a DAG.  Let $i,j$ be two distinct nodes, and let $S\subseteq V\setminus\{i,j\}$.  
\begin{itemize}
    \item[(i)] If $X$ is a multivariate normal random vector with covariance matrix $\Sigma$, then the conditional independence $X_i\ci X_j\mid X_S$ holds if and only if $\det(\Sigma_{iS,jS})=0$.
         \item[(ii)]    
    The conditional independence constraint $\det(\Sigma_{iS,jS})=0$ holds for all covariance matrices $\Sigma\in M_G$ if and only if the $d$-separation $i\perp_d j\mid S$ holds in $G$.
    \item[(iii)] A matrix $\Sigma\in\mathit{PD}$ is in $M_G$ if and only if $\det(\Sigma_{iS,jS})=0$ for all triples $(i,j,S)$ with $i\perp_d j\mid S$ in $G$.
\end{itemize}
\end{theorem}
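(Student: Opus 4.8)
My plan is to prove (i) directly by linear algebra and then bootstrap (ii) and (iii) from it using the recursive structure of a DAG.

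For (i) I would condition on $X_S$. Since $X$ is multivariate normal, the conditional distribution of $(X_i,X_j)$ given $X_S$ is normal with covariance matrix the Schur complement $\Sigma_{\{i,j\},\{i,j\}}-\Sigma_{\{i,j\},S}\Sigma_{SS}^{-1}\Sigma_{S,\{i,j\}}$. Jointly normal variables are independent if and only if they are uncorrelated, so $X_i\ci X_j\mid X_S$ holds iff the off-diagonal entry $\Sigma_{ij}-\Sigma_{iS}\Sigma_{SS}^{-1}\Sigma_{Sj}$ of that matrix is zero. Writing the submatrix $\Sigma_{iS,jS}$ in block form with leading row indexed by $i$ and leading column indexed by $j$, the Schur determinant identity gives $\det(\Sigma_{iS,jS})=\det(\Sigma_{SS})\,(\Sigma_{ij}-\Sigma_{iS}\Sigma_{SS}^{-1}\Sigma_{Sj})$; since $\Sigma\in\mathit{PD}$ forces $\det(\Sigma_{SS})>0$, the determinant vanishes exactly when the partial covariance does, which is (i).

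For the forward half of (ii) I would first observe that every $\Sigma\in M_G$ comes from a density that factorizes along $G$: from \eqref{sem}, $X_j=\sum_{i\in\pa(j)}\lambda_{ij}X_i+\varepsilon_j$ with $\varepsilon_j$ independent of the remaining errors, hence of $X_{V\setminus\de(j)}$, which yields $f(x)=\prod_{j\in V}f(x_j\mid x_{\pa(j)})$. The equivalence of this factorization with the global Markov property (i.e.\ $d$-separation implies conditional independence) is standard graphical-model theory, and combined with (i) it gives $\det(\Sigma_{iS,jS})=0$ for all $\Sigma\in M_G$ whenever $i\perp_d j\mid S$. For the converse I would argue by contraposition: if $i$ and $j$ are $d$-connected given $S$, then $(\Lambda,\boldsymbol\omega)\mapsto\det(\phi_G(\Lambda,\boldsymbol\omega)_{iS,jS})$ is a polynomial that is not identically zero, so the constraint fails on a dense subset of the parameter space and hence for some $\Sigma\in M_G$; one witnesses the non-vanishing by choosing parameters that activate the edges along a $d$-connecting path and using the trek rule of Theorem~\ref{trekrule} to locate a monomial that survives in the determinant expansion. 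I expect this non-vanishing (faithfulness) step to be the main obstacle, since it requires controlling the cancellation among trek monomials that the determinant induces.

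For (iii) the forward direction is immediate from (ii). For the converse, given $\Sigma\in\mathit{PD}$ satisfying $\det(\Sigma_{iS,jS})=0$ for every $d$-separation $i\perp_d j\mid S$ in $G$, I would fix a topological order of $V$ and check that for each node $j$ and each non-parent predecessor $k$ the $d$-separation $j\perp_d k\mid\pa(j)$ holds (an outgoing path from $j$ must turn around and create a collider among strict descendants of $j$, which no element of $\pa(j)$ can activate, while an incoming path is blocked at a parent lying in the conditioning set). By (i) the partial covariance of $X_j$ and $X_k$ given $X_{\pa(j)}$ then vanishes for each such $k$, and since the variables are jointly Gaussian this accumulates to $X_j\ci X_{\mathrm{pred}(j)\setminus\pa(j)}\mid X_{\pa(j)}$; combined with the chain-rule factorization in the chosen order it shows that the Gaussian density with covariance $\Sigma$ factorizes as $\prod_{j\in V}f(x_j\mid x_{\pa(j)})$. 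Each factor $f(x_j\mid x_{\pa(j)})$ is Gaussian with mean linear in $x_{\pa(j)}$, with coefficients $\beta_{ij}$ say, and positive conditional variance $\sigma_j^2$; setting $\lambda_{ij}=\beta_{ij}$ for $i\in\pa(j)$ (and $0$ otherwise) and $\omega_j=\sigma_j^2$ defines a pair $(\Lambda,\boldsymbol\omega)\in\mathbb{R}^E\times(0,\infty)^V$ whose induced SEM has precisely this density, so $\phi_G(\Lambda,\boldsymbol\omega)=\Sigma$ and $\Sigma\in M_G$.
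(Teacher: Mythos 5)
The paper does not actually prove Theorem~\ref{thm:conditional_independence_dsep}; it states it as known background and points to \citet[Section 10]{drton:algebraic} and \citet[Section 8]{richardson:spirtes:2002}, so there is no in-paper argument to compare against step by step. Judged on its own terms, your proposal is essentially a correct self-contained proof of parts (i) and (iii): the Schur-complement identity $\det(\Sigma_{iS,jS})=\det(\Sigma_{SS})\bigl(\sigma_{ij}-\Sigma_{iS}\Sigma_{SS}^{-1}\Sigma_{Sj}\bigr)$ together with ``uncorrelated $\Rightarrow$ independent'' for Gaussians handles (i) (implicitly you use $\Sigma\in\mathit{PD}$ so that $\Sigma_{SS}$ is invertible, which is the standing assumption in this paper), and your converse for (iii) via the ordered local Markov property, the Gaussian accumulation of pairwise into joint conditional independence, and the reconstruction of $(\Lambda,\boldsymbol\omega)$ from the regression coefficients and conditional variances is the standard and correct argument.

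The one genuine gap is the completeness half of (ii). You correctly reduce it to exhibiting a single $\Sigma\in M_G$ with $\det(\Sigma_{iS,jS})\neq 0$ when $i$ and $j$ are $d$-connected given $S$, and you correctly identify the strategy (activate only the edges of a $d$-connecting path and inspect the trek expansion), but you stop at ``I expect this to be the main obstacle,'' i.e.\ the cancellation control is not actually carried out, and density-of-nonvanishing arguments do not help until the polynomial is shown to be not identically zero. This step can be closed exactly as the paper does for the harder partially homoscedastic analogue in Appendix~\ref{A2}: give every edge on the active path the common weight $\rho\in(0,1)$, set all other edge weights to zero and all error variances to one, and order the rows and columns as $i,z_1,\dots,z_k,j$ (the colliders $z_t\in S$ on the path) followed by $S\setminus S'$. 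Then $\Sigma_{ijS,ijS}$ takes the arrowed tridiagonal-plus-identity form of display~\eqref{eq:Sigma'}, and deleting row $i$ and column $j$ leaves a triangular structure whose determinant is the single surviving monomial $\rho^{\sum_t r_t}\neq 0$, where $r_t$ is the number of edges on the $t$-th segment. With that computation inserted, your proof is complete; without it, the faithfulness direction of (ii) rests on an unverified claim.
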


\section{Partial homoscedasticity}\label{sec_ghe}

\subsection{Setup}

We now introduce a class of models that incorporate partial knowledge about equality among the variances in $\boldsymbol{\omega}=(\omega_i)_{i\in V}$ of the errors in the linear SEM given by a DAG $G=(V,E)$.  We encode this partial knowledge in a partition of the vertex set $V$.

\begin{definition}
Let $\Pi=\{\pi_1,\dots,\pi_K\}$ be a family of non-empty subsets of $V$.  Then $\Pi$ is a partition of $V$ if $\pi_1,\dots,\pi_K$ are pairwise disjoint and $\cup_{k=1}^K \pi_k=V$.  The sets $\pi_1,\dots,\pi_K$ are the \emph{blocks} of the partition.  Corresponding to $\Pi$ is the equivalence relation that has $i,j\in V$  equivalent if $i,j$ are in the same block of $\Pi$;  we then write $i\sim_\Pi j$.
\end{definition}

In order to model a priori assumptions about the equality of some pairs of variances in $\boldsymbol{\omega}=(\omega_i)_{i\in V}$, we are led to consider the partition $\Pi$ such that $i\sim_\Pi j$ precisely when the equality $\omega_i=\omega_j$ is implied by our a priori knowledge.  Each one of the proposed partially homoscedastic linear SEMs is thus associated to a pair $(G,\Pi)$, where $G=(V,E)$ is a DAG and $\Pi$ is a partition of $V$.  In this model, each block of $\Pi$ groups a set of nodes that index error variances that are constrained to be equal to each other.  In other words, we have homoscedasticity of the errors within each partition block, but possibly different variances between the blocks.

\begin{definition}
Let $G=(V,E)$ be a DAG, and let $\Pi$ be a partition of $V$.  
The \emph{partially homoscedastic linear Gaussian model} given by the pair $(G,\Pi)$ is the family of all multivariate normal distributions on $\mathbb{R}^V$ with covariance matrix in the set
$$
M_{G,\Pi}=\left\{\Sigma: \Sigma=\phi_G(\Lambda,\boldsymbol{\omega}),\ \Lambda\in \mathbb{R}^E ,\  \boldsymbol{\omega}\in (0,\infty)^V \text{ \rm with } \omega_{ii}=\omega_{jj} \text{ \rm if } i\sim_\Pi j\right\}.
$$
Given a partition $\Pi$, we call two DAGs $G_1$ and $G_2$ model equivalent if they induce the same partially homoscedastic linear Gaussian model, i.e., if $M_{G_1,\Pi}=M_{G_2,\Pi}$.
\end{definition}

From the point of view of causal discovery, assumptions of (partial) homoscedasticity are interesting as extra constraints on error variances lead to a refinement of the Markov equivalence classes that result from conditional independence relations only. 
We exemplify this point here.

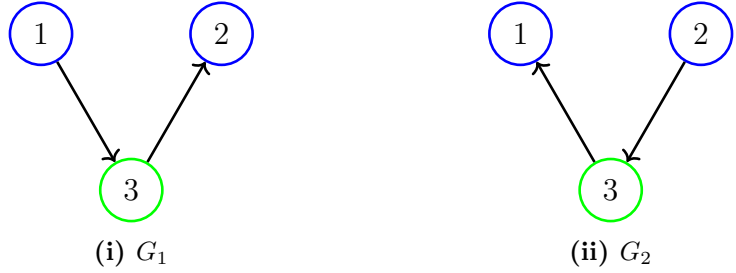
\begin{figure}[t]
\centering
\begin{minipage}[t]{0.4\textwidth}
\centering
\begin{tikzpicture}
[node1/.style={circle,fill=white,draw=blue,minimum size=2 em,inner sep=2pt, line width=1pt},
node2/.append 
style={circle,fill=white,draw=green,minimum size=2 em,inner sep=2pt, line width=1pt}]
\node[node1] (1) at (-2.4, 0) {$1$};
\node[node1] (2) at (2*1.2-2.4, 0) {$2$};
\node[node2] (3) at (1.2-2.4, -1.732*1.2) {$3$};

\draw [->, line width=1pt] (1) to (3);
\draw [->, line width=1pt] (3) to (2);

\end{tikzpicture}
\subcaption{$G_1$}
\end{minipage}
\begin{minipage}[t]{0.4\textwidth}
\centering
\begin{tikzpicture}
[node1/.style={circle,fill=white,draw=blue,minimum size=2 em,inner sep=2pt, line width=1pt},
node2/.append 
style={circle,fill=white,draw=green,minimum size=2 em,inner sep=2pt, line width=1pt}]
\node[node1] (1) at (-2.4, 0) {$1$};
\node[node1] (2) at (2*1.2-2.4, 0) {$2$};
\node[node2] (3) at (1.2-2.4, -1.732*1.2) {$3$};

\draw [->, line width=1pt] (3) to (1);
\draw [->, line width=1pt] (2) to (3);

\end{tikzpicture}
\subcaption{$G_2$}
\end{minipage}
\caption{Under the constraint $\omega_{11}=\omega_{22}$, the two Markov equivalent DAGs $G_1$ and $G_2$ generate different partially homoscedastic linear Gaussian models.}
\label{3nodes_eg}
\end{figure}

\begin{example}
Let $G_1$ and $G_2$ be the two DAGs in Figure~\ref{3nodes_eg}.  Consider first the finest partition $\Pi_{\min}=\{\{1\},\{2\},\{3\}\}$, which encodes that the error variances may be arbitrary positive numbers.  Then $M_{G_1,\Pi_{\min}}=M_{G_1}=M_{G_2}=M_{G_2,\Pi_{\min}}$.  Indeed, the two DAGs $G_1$ and $G_2$ are Markov equivalent, meaning that they encode the same conditional independence relations.  Both graphs feature precisely one d-separation relation, namely, that nodes 1 and 2 are d-separated by 3.  The model may thus be defined by imposing the conditional independence of $X_1$ and $X_2$ given $X_3$, which under Gaussianity is equivalent to the constraint that the covariance matrix $\Sigma=(\sigma_{ij})$ satisfies the polynomial equation  $\sigma_{12}\sigma_{23}-\sigma_{12}\sigma_{33}=0$ (recall Theorem~\ref{thm:conditional_independence_dsep}).

Now let us change the partition to $\Pi=\{\{1,2\},\{3\}\}$, i.e., we assume that $\omega_{11}=\omega_{22}$.  One can then show that the model given by $G_1$ is the set of $3\times 3$ covariance matrices
\[
{M}_{G_1,\Pi}=\{\,\Sigma\in \mathit{PD} \::\: \sigma_{11}\sigma_{33}=\sigma_{22}\sigma_{33}-\sigma_{23}^2,\  \sigma_{13}\sigma_{23}-\sigma_{12}\sigma_{33}=0\, \},
 \]
 whereas $G_2$ defines
 \[
{M}_{G_2,\Pi}=\{\,\Sigma\in \mathit{PD} \::\:
 \sigma_{22}\sigma_{33}=\sigma_{11}\sigma_{33}-\sigma_{13}^2,\ \sigma_{13}\sigma_{23}-\sigma_{12}\sigma_{33}=0\,\}.
 \]
 Both ${M}_{G_1,\Pi}$ and ${M}_{G_2,\Pi}$ have dimension 4, but they are different and their intersection is of lower dimension.
\end{example}

In the remainder of this section, we develop a more general algebraic description of partially homoscedastic linear Gaussian models.  This description furnishes the basis for solving the problem of deciding model equivalence, as developed in Section~\ref{sec_eq}.

\subsection{Equal variance constraints}\label{eqvar_csts}

The key difference between partially homoscedastic models and the classical case of arbitrary Gaussian errors is the emergence of constraints due to the equalities among error variances.  To exbhibit these constraints we first review how an error variance can be identified from the covariance matrix of the observations.

\begin{theorem}\label{wii_thm}
Let $G=(V,E)$ be a DAG, and let $\Sigma=\phi_G(\Lambda,\boldsymbol{\omega})$ for $\Lambda\in\mathbb{R}^E$ and $\boldsymbol{\omega}\in(0,\infty)^V$.  Then for any $i\in V$, the error variance $\omega_{ii}$ can be computed from the covariance matrix $\Sigma$ as
\begin{align}\label{w-o_eq}
\omega_{ii}=\sigma_{ii}-\Sigma_{i,A}(\Sigma_{A,A})^{-1}\Sigma_{A,i},
\end{align}
where $A$ may be taken to be any subset with $\pa(i)\subseteq A\subseteq V\backslash\de(i)$.
\end{theorem}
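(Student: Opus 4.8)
The plan is to turn the identity \eqref{w-o_eq} into a statement about least-squares prediction and then read it off directly from the structural equations. First I would fix a concrete representation of the model: let $\epsilon=(\varepsilon_k)_{k\in V}$ be a mean-zero random vector with independent coordinates and $\var(\varepsilon_k)=\omega_{kk}$, and put $X=(I-\Lambda)^{-\trans}\epsilon$. By \eqref{cov}, $X$ is mean-zero with covariance matrix $\Sigma$, and since $\boldsymbol{\omega}\in(0,\infty)^V$ we have $\Sigma\in\mathit{PD}$, so $\Sigma_{A,A}$ is invertible for every $A$. The quantity on the right-hand side of \eqref{w-o_eq} is the Schur complement $\sigma_{ii}-\Sigma_{i,A}(\Sigma_{A,A})^{-1}\Sigma_{A,i}$, which is exactly the variance of the residual $X_i-\widehat X_i$ of the best linear predictor $\widehat X_i=\Sigma_{i,A}(\Sigma_{A,A})^{-1}X_A$ of $X_i$ from $X_A=(X_j)_{j\in A}$. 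Hence it suffices to show that this residual variance equals $\omega_{ii}$.

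Second I would record the support pattern of $(I-\Lambda)^{-1}$. Expanding $(I-\Lambda)^{-1}=\sum_{n\ge 0}\Lambda^{n}$ (a finite sum, as $G$ is acyclic), the $(k,j)$ entry of $\Lambda^{n}$ is a sum over directed walks of length $n$ from $k$ to $j$, so $[(I-\Lambda)^{-1}]_{kj}\neq 0$ only if $k\in\an(j)\cup\{j\}$. Since $X=(I-\Lambda)^{-\trans}\epsilon$, this gives
\begin{align*}
X_j=\sum_{k\in\an(j)\cup\{j\}}\big[(I-\Lambda)^{-1}\big]_{kj}\,\varepsilon_k,\qquad j\in V.
\end{align*}
Now fix any $j\in A$. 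Because $A\cap\de(i)=\emptyset$ we have $j\notin\de(i)$, i.e.\ $i\notin\an(j)\cup\{j\}$, so $X_j$ is a linear combination of errors $\varepsilon_k$ with $k\neq i$; by independence of the coordinates of $\epsilon$ this yields $\E[\varepsilon_iX_j]=0$. This is the step where the hypothesis $A\subseteq V\setminus\de(i)$ is used.

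The other hypothesis, $\pa(i)\subseteq A$, enters as follows. The structural equation \eqref{sem} at node $i$ reads $X_i=\sum_{j\in\pa(i)}\lambda_{ji}X_j+\varepsilon_i$; put $Y:=\sum_{j\in\pa(i)}\lambda_{ji}X_j$, so $Y\in\vecspan\{X_j:j\in A\}$ and $X_i-Y=\varepsilon_i$. By the previous step the residual $\varepsilon_i$ is uncorrelated with every $X_j$, $j\in A$, hence with all of $\vecspan\{X_j:j\in A\}$; by uniqueness of the $L^2$-projection of $X_i$ onto this finite-dimensional (hence closed) subspace, $Y$ coincides with the best linear predictor $\widehat X_i$ from the first step. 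Its residual variance is $\var(X_i-Y)=\var(\varepsilon_i)=\omega_{ii}$, which together with the Schur-complement identity established in the first step gives \eqref{w-o_eq}.

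I do not expect a genuine obstacle; the argument is essentially bookkeeping, and the two delicate points are the ancestral support description of $(I-\Lambda)^{-1}$ and the complementary roles of the two conditions on $A$. The inclusion $\pa(i)\subseteq A$ is precisely what forces the structural-equation predictor $Y$ of $X_i$ to lie in $\vecspan\{X_j:j\in A\}$, while $A\subseteq V\setminus\de(i)$ is precisely what keeps the residual $\varepsilon_i$ orthogonal to that span; using each inclusion once accounts for the ``any $A$ in between'' quantifier in the statement.
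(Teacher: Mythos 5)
Your proof is correct, but it takes a different route from the paper's. The paper argues via the trek rule (Theorem~\ref{trekrule}): since $A\subseteq V\backslash\de(i)$, every trek from a node of $A$ to $i$ must end with an edge pointing into $i$, which yields the matrix identity $\Sigma_{A,i}=\Sigma_{A,A}\Lambda_{A,i}$, i.e.\ the population regression coefficients of $X_i$ on $X_A$ coincide with the (zero-padded) structural coefficients; substituting this into the expansion of $\sigma_{ii}$ gives \eqref{w-o_eq} directly. You reach the same intermediate fact probabilistically: the support description of $(I-\Lambda)^{-1}$ plus independence of the errors gives $\E[\varepsilon_i X_j]=0$ for $j\in A$, and since $\pa(i)\subseteq A$ the structural predictor $Y=\sum_{j\in\pa(i)}\lambda_{ji}X_j$ lies in $\vecspan\{X_j:j\in A\}$ and is therefore the $L^2$-projection, so the Schur complement equals $\var(\varepsilon_i)=\omega_{ii}$. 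Both arguments use the two hypotheses on $A$ in exactly complementary ways, and your step establishing the support of $(I-\Lambda)^{-1}$ via $\sum_n\Lambda^n$ is essentially the trek rule restricted to directed paths. What your version buys is self-containedness and a transparent statistical interpretation (the right-hand side of \eqref{w-o_eq} is a residual variance); what the paper's version buys is consistency with the algebraic framework that is reused immediately afterwards, e.g.\ the converse Theorem~\ref{wii-rev_thm} and the proof of Theorem~\ref{iden_thm} are phrased in terms of trek monomials, so the trek-rule derivation sets up the notation those proofs rely on.
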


\begin{proof} 
We adapt the proof of Theorem 7.1 in \citet{drton:algebraic}, where $A=\pa(i)$.
If a trek between $i$ and $j$ ends at $i$ with an edge of the form $k\leftarrow i$, then the trek is a directed path from $i$ to $j$ and $j\in\de(i)$. Now since $A\subseteq V\backslash\de(i)$, every trek between $i$ and a node in $A$ must end with an edge $k\rightarrow i$. By Theorem \ref{trekrule} we have
\begin{align*}
\Sigma_{A,i}&=\Sigma_{A,\pa(i)}\Lambda_{\pa(i),i}=\Sigma_{A,A}\Lambda_{A,i},
\end{align*}
where the second equality comes from the fact that $\pa(i)\subseteq A$ and $\Lambda_{ki}=0$ for $k\notin\pa(i)$.  In addition, these zeroes in $\Lambda_{A,i}$ imply that
\[
\sigma_{ii}=\omega_{ii}+\Lambda_{\pa(i),i}^T\Sigma_{\pa(i),\pa(i)}\Lambda_{\pa(i),i}=\omega_{ii}+\Lambda_{A,i}^T\Sigma_{A,A}\Lambda_{A,i}.
\qedhere
\]
\end{proof}

We immediately obtain the following corollary for
an equal variance assumption.
\begin{corollary}\label{wij_eq_col}
If two random errors $\epsilon_i$ and $\epsilon_j$ have equal variances, i.e., $i$ and $j$ are in the same block of a considered partition $\Pi$, then all covariance matrices in $M_{G,\Pi}$ satisfy that
\begin{align}\label{wij_eq}
\sigma_{ii}-\Sigma_{i,A_i}(\Sigma_{A_i,A_i})^{-1}\Sigma_{A_i,i}=\sigma_{jj}-\Sigma_{j,A_j}(\Sigma_{A_j,A_j})^{-1}\Sigma_{A_j,j}
\end{align}
for all subsets $A_i$ and $A_j$ such that $\pa(i)\subseteq A_i\subseteq V\backslash\de(i)$ and $\pa(j)\subseteq A_j\subseteq V\backslash\de(j)$.
\end{corollary}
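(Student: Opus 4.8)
The plan is to obtain the statement as an immediate consequence of Theorem~\ref{wii_thm}. First I would take an arbitrary covariance matrix $\Sigma\in M_{G,\Pi}$ and use the definition of the partially homoscedastic model to write $\Sigma=\phi_G(\Lambda,\boldsymbol{\omega})$ for some $\Lambda\in\mathbb{R}^E$ and some $\boldsymbol{\omega}\in(0,\infty)^V$ respecting the equal-variance constraints encoded by $\Pi$. Since $i$ and $j$ lie in the same block of $\Pi$, i.e.\ $i\sim_\Pi j$, this forces $\omega_{ii}=\omega_{jj}$.

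Next, I would fix any admissible sets $A_i$ and $A_j$, that is, $\pa(i)\subseteq A_i\subseteq V\backslash\de(i)$ and $\pa(j)\subseteq A_j\subseteq V\backslash\de(j)$. Applying Theorem~\ref{wii_thm} to node $i$ with the set $A_i$ yields
\[
\sigma_{ii}-\Sigma_{i,A_i}(\Sigma_{A_i,A_i})^{-1}\Sigma_{A_i,i}=\omega_{ii},
\]
and applying it to node $j$ with the set $A_j$ yields
\[
\sigma_{jj}-\Sigma_{j,A_j}(\Sigma_{A_j,A_j})^{-1}\Sigma_{A_j,j}=\omega_{jj}.
\]
Because $\omega_{ii}=\omega_{jj}$, the two left-hand sides coincide, which is precisely the asserted identity~\eqref{wij_eq}. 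As $A_i$, $A_j$, and $\Sigma\in M_{G,\Pi}$ were arbitrary, this establishes the equality for all such subsets and all covariance matrices in the model.

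There is essentially no obstacle here: all the substance is carried by Theorem~\ref{wii_thm}, and the corollary merely records that an equality of error variances translates into an algebraic identity between two rational expressions in the entries of $\Sigma$. The only point worth noting is that Theorem~\ref{wii_thm} already permits an arbitrary admissible conditioning set $A$, so no additional argument about the choice of $A_i$ or $A_j$ is required beyond quoting that theorem twice.
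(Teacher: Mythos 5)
Your proof is correct and matches the paper's intent exactly: the paper presents this as an immediate consequence of Theorem~\ref{wii_thm}, obtained by applying that theorem to $i$ with $A_i$ and to $j$ with $A_j$ and invoking $\omega_{ii}=\omega_{jj}$ from the definition of $M_{G,\Pi}$. Nothing is missing.
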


Theorem \ref{wii_thm} admits the following converse.

\begin{theorem}\label{wii-rev_thm}
Let $G=(V,E)$ be a DAG, and let $i\in V$ be one of its nodes.  Let $A\subseteq V\setminus\{i\}$.  Fix any vector of positive error variances $\boldsymbol{\omega}\in(0,\infty)^V$.  If for all $\Lambda\in\mathbb{R}^E$ the matrix $\Sigma=\phi_G(\Lambda,\boldsymbol{\omega})$ satisfies
equation \eqref{w-o_eq}, then it must hold that $\pa(i)\subseteq A\subseteq V\backslash\de(i)$.
\end{theorem}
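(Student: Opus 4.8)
The plan is to argue by contraposition: assume $A$ fails the inclusion $\pa(i)\subseteq A\subseteq V\setminus\de(i)$, and produce a single coefficient matrix $\Lambda\in\mathbb{R}^E$ for which $\Sigma=\phi_G(\Lambda,\boldsymbol{\omega})$ violates \eqref{w-o_eq}. The first observation I would record is that the right-hand side of \eqref{w-o_eq} is exactly the conditional variance $\var[X_i\mid X_A]$ in the Gaussian law with covariance $\Sigma$ --- equivalently, the residual variance of the best linear prediction of $X_i$ from $X_A$. This quantity is well defined because $\phi_G$ takes values in $\mathit{PD}$, and, by the law of total variance, it can only decrease as $A$ is enlarged; so $\var[X_i\mid X_A]\le\var[X_i\mid X_B]$ whenever $B\subseteq A$. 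The goal is thus to choose $\Lambda$ that makes this conditional variance differ from $\omega_{ii}$.

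Negating the inclusion leaves two cases, treated separately. First, suppose some parent $j\in\pa(i)$ is not in $A$. I would take $\Lambda$ supported on the single edge $j\to i$ with coefficient $t\neq0$ (this lies in $\mathbb{R}^E$ since $j\to i\in E$). Solving the SEM gives $X_k=\varepsilon_k$ for every $k\neq i$ and $X_i=\varepsilon_i+t\varepsilon_j$, so $X_i$ is uncorrelated with $X_k$ for all $k\in A$ (as $j\notin A$); hence the regression of $X_i$ on $X_A$ is trivial and $\var[X_i\mid X_A]=\sigma_{ii}=\omega_{ii}+t^2\omega_{jj}>\omega_{ii}$, contradicting the hypothesis. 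Second, suppose some proper descendant $j$ of $i$ lies in $A$ (the convention $i\in\de(i)$ is harmless here because $i\notin A$). Fixing a directed path $i=k_0\to k_1\to\cdots\to k_m=j$ in $G$, I would take $\Lambda$ supported on exactly its $m$ edges, each with coefficient $t\neq0$. A short induction along the path shows $X_i=\varepsilon_i$, so $\var[X_i]=\omega_{ii}$, while $X_j$ contains the nonzero term $t^m\varepsilon_i$, so $\operatorname{Cov}[X_i,X_j]\neq0$. Since $X_i$ and $X_j$ are correlated, $\var[X_i\mid X_j]<\var[X_i]=\omega_{ii}$, and by the monotonicity above (using $j\in A$ and $i\notin A$) we get $\var[X_i\mid X_A]\le\var[X_i\mid X_j]<\omega_{ii}$, again contradicting the hypothesis.

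I do not anticipate a serious obstacle; the substance is in the bookkeeping. The points needing care are: that each sparse $\Lambda$ genuinely lies in $\mathbb{R}^E$ (true, since all activated edges are in $E$); that the stated covariances vanish or are nonzero (a one-line check of the solved SEM in each case); that $\Sigma_{A,A}$ remains invertible (automatic from positive definiteness of $\Sigma$); and that $\boldsymbol{\omega}$ is fixed in advance throughout the statement, so the counterexample $\Lambda$ is allowed to depend on it --- which is immaterial, since both constructions work verbatim for every $\boldsymbol{\omega}\in(0,\infty)^V$. As an alternative to the two explicit solves, one could read the needed covariances directly off the trek rule (Theorem~\ref{trekrule}), but the direct computations are the shortest path.
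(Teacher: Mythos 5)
Your proposal is correct and follows the same overall strategy as the paper's proof: argue by contraposition, split into the two failures of the inclusion, and in each case activate a sparse $\Lambda$ (a single edge $j\to i$, respectively a single directed path out of $i$) to break equation \eqref{w-o_eq}. The first case is essentially identical to the paper's. In the second case you deviate in a useful way: the paper first replaces the offending descendant $k$ by the \emph{first} node of $A$ encountered on the path (so that all interior nodes avoid $A$), which makes $\Sigma_{i,A}$ supported on the single coordinate $k$ and lets one compute $[(\Sigma_{A,A})^{-1}]_{kk}=1/\sigma_{kk}$ explicitly; you instead observe that the right-hand side of \eqref{w-o_eq} is the residual variance of the best linear predictor of $X_i$ from $X_A$, which is monotone nonincreasing in the conditioning set, so that $\var[X_i\mid X_A]\le\var[X_i\mid X_j]<\sigma_{ii}=\omega_{ii}$ once $\operatorname{Cov}[X_i,X_j]=t^m\omega_{ii}\neq 0$. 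Your route avoids both the minimal-path reduction and any matrix inversion at the cost of invoking the (standard) monotonicity fact; the paper's route is fully explicit and produces the exact value of the defect, which it then reuses verbatim in the proof of Theorem \ref{iden_thm}. Both are sound.
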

\begin{proof}
Suppose there exists a node $k\in\pa(i)\backslash A$.  Choose $\Lambda$ to have all entries zero except for $\lambda_{ki}$.  For this choice, the trek rule in Theorem \ref{trekrule} implies that $\Sigma_{i,A}=0$ and, thus, the right hand side of \eqref{w-o_eq} is equal to $\sigma_{ii}$.  But the trek rule also yields that $\sigma_{ii}=\omega_{ii}+\lambda_{ki}^2\omega_{kk}>\omega_{ii}$, which contradicts the assumption that \eqref{w-o_eq} holds.  We conclude that $\pa(i)\subseteq A$.

Next, suppose that there exists a node $k\in A\backslash(V\backslash\de(i))=\de(i)\cap A$. Then $G$ contains a (non-trivial) directed path from $i$ to $k$.  Without loss of generality, we may assume that all interior nodes on the path between $i$ and $k$ are not in $A$. Indeed, we can always pick $k$ to be the first node in $A$ that lies on the path.  So the path is of the form $i\rightarrow m_1\rightarrow \cdots \rightarrow m_t\rightarrow k$ with $m_1,\dots,m_t\notin A$.
Now, take $\Lambda$ with all entries zero except $\lambda_{im_1},\lambda_{m_1 m_2},\dots, \lambda_{m_{t-1}m_t},\lambda_{m_t k}$. The trek rule in Theorem \ref{trekrule} asserts that $\sigma_{ii}=\omega_{ii}$ under this parameterization (every trek between $i$ and $i$ has at least one edge with zero edge weight).  But then equation \eqref{w-o_eq} becomes
\begin{align*}
\omega_{ii}&=\sigma_{ii}-\Big(\lambda_{im_1}\lambda_{m_tk}\prod_{s=2}^t\lambda_{m_{s-1}m_s}\Big)^2[(\Sigma_{A,A})^{-1}]_{kk}\\
&=\sigma_{ii}-\Big(\lambda_{im_1}\lambda_{m_tk}\prod_{s=2}^t\lambda_{m_{s-1}m_s}\Big)^2\frac{1}{\sigma_{kk}}<\sigma_{ii}=\omega_{ii},
\end{align*}
which is again a contradiction.  We conclude that $A\subseteq V\backslash\de(i)$.
\end{proof}

Combining Theorems~\ref{wii_thm} and~\ref{wii-rev_thm}, we can characterize equal variance constraints that hold in a partially homoscedastic linear model. Theorem \ref{iden_thm} is the characterization. For the proof, see Appendix \ref{A1}.

\begin{theorem}\label{iden_thm}
Let $G=(V,E)$ be a DAG, and let $\Pi$ be a partition of the vertex set $V$.  Suppose $i\sim_\Pi j$ are two distinct nodes that lie in the same block of $\Pi$, and let $A_i\subseteq V\setminus\{i\}$ and $A_j\subseteq V\setminus\{j\}$. Then 
the equation \eqref{wij_eq} holds for all matrices $\Sigma\in M_{G,\Pi}$ if and only if $\pa(i)\subseteq A_i\subseteq V\backslash\de(i)$ and $\pa(j)\subseteq A_j\subseteq V\backslash\de(j)$.
\end{theorem}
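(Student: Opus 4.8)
The plan is to prove the two implications separately. The ''if'' direction is immediate: if $\pa(i)\subseteq A_i\subseteq V\setminus\de(i)$ and $\pa(j)\subseteq A_j\subseteq V\setminus\de(j)$, then $A_i$ and $A_j$ are of the type covered by Theorem~\ref{wii_thm}, so for every $\Sigma=\phi_G(\Lambda,\boldsymbol{\omega})\in M_{G,\Pi}$ the left side of \eqref{wij_eq} equals $\omega_{ii}$ and the right side equals $\omega_{jj}$; since $i\sim_\Pi j$ forces $\omega_{ii}=\omega_{jj}$, equation \eqref{wij_eq} holds. (This is exactly Corollary~\ref{wij_eq_col}.) So all the work is in the converse.

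For the ''only if'' direction I would argue by contraposition and exhibit, whenever the set conditions fail, a matrix in $M_{G,\Pi}$ violating \eqref{wij_eq}. Since \eqref{wij_eq}, the relation $i\sim_\Pi j$, and the asserted conclusion are all symmetric under exchanging $(i,A_i)$ with $(j,A_j)$, I may assume the condition for $i$ fails, i.e.\ $\pa(i)\not\subseteq A_i$ or $A_i\cap\de(i)\neq\emptyset$. I fix once and for all a vector $\boldsymbol{\omega}\in(0,\infty)^V$ with $\omega_{aa}=\omega_{bb}$ whenever $a\sim_\Pi b$ (say $\boldsymbol{\omega}=\mathbf 1$), so that $\phi_G(\Lambda,\boldsymbol{\omega})\in M_{G,\Pi}$ for every $\Lambda\in\RR^E$, and I let only $\Lambda$ vary. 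It helps to read \eqref{wij_eq} as $\var[X_i\mid X_{A_i}]=\var[X_j\mid X_{A_j}]$ and to use two elementary facts about Gaussian conditional variances: $\var[X_j\mid X_{A_j}]\le\var[X_j]=\sigma_{jj}$, and $\var[X_j\mid X_{A_j}]\ge\var[X_j\mid X_{V\setminus\{j\}}]=1/(\Sigma^{-1})_{jj}$, where $\Sigma^{-1}=(I-\Lambda)\diag(\boldsymbol{\omega})^{-1}(I-\Lambda)^{\trans}$ gives $(\Sigma^{-1})_{jj}=\omega_{jj}^{-1}+\sum_{l\in\ch(j)}\lambda_{jl}^2/\omega_{ll}$. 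The two cases then parallel the two halves of the proof of Theorem~\ref{wii-rev_thm}. In the first case, $\pa(i)\not\subseteq A_i$: I pick $k\in\pa(i)\setminus A_i$ and take $\Lambda$ with a single nonzero entry $\lambda_{ki}$, so that (solving $X=\Lambda^{\trans}X+\epsilon$, or by the trek rule) $X_m=\varepsilon_m$ for $m\neq i$ and $X_i=\varepsilon_i+\lambda_{ki}\varepsilon_k$. As $i,k\notin A_i$, the variable $X_i$ is uncorrelated with $X_{A_i}$, so the left side of \eqref{wij_eq} equals $\sigma_{ii}=\omega_{ii}+\lambda_{ki}^2\omega_{kk}$, which tends to $\infty$, whereas the right side is at most $\sigma_{jj}$, which does not depend on $\lambda_{ki}$ because $j\neq i$. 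Hence \eqref{wij_eq} is violated once $|\lambda_{ki}|$ is large enough.

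In the second case, $A_i\cap\de(i)\neq\emptyset$: I choose a node $k\in A_i\cap\de(i)$ (necessarily $k\neq i$, so $k$ is a proper descendant) together with a directed path $i\to v_1\to\cdots\to v_{r-1}\to k$, and let $\Lambda$ be supported on the edges of this path, with the first weight $\lambda_{iv_1}=a$ a large parameter and every other path weight equal to $1$. Then $X_i=\varepsilon_i$, while $X_k=a\,\varepsilon_i+(\text{a sum of errors with coefficients not depending on }a)$, so $\var[X_i\mid X_k]\to 0$ as $a\to\infty$; since $k\in A_i$, monotonicity of conditional variance gives $\var[X_i\mid X_{A_i}]\le\var[X_i\mid X_k]\to 0$, i.e.\ the left side of \eqref{wij_eq} tends to $0$. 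For the right side, the only edge out of $j$ that $\Lambda$ can activate is a single path edge of weight $1$ (this occurs precisely when $j$ is one of $v_1,\dots,v_{r-1}$; if $j=k$ or $j$ lies off the path there is no such edge), so $(\Sigma^{-1})_{jj}$ stays bounded as $a\to\infty$, and therefore $\var[X_j\mid X_{A_j}]\ge 1/(\Sigma^{-1})_{jj}$ is bounded below by a positive constant independent of $a$. So again \eqref{wij_eq} fails for $a$ large, which completes the contrapositive.

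I expect the delicate point to be exactly this second case: one must drive $\var[X_i\mid X_{A_i}]$ to $0$ without simultaneously driving $\var[X_j\mid X_{A_j}]$ to $0$. Putting the blow-up weight $a$ on the \emph{first} edge $i\to v_1$ only is what makes this work, because that edge is incoming to $v_1$ and hence never appears among the outgoing edges of $j$ (as $j\neq i$); consequently $(\Sigma^{-1})_{jj}$—and with it the conditional variance of $X_j$ given all other variables, which lower-bounds $\var[X_j\mid X_{A_j}]$—remains bounded away from $0$ uniformly in $a$. Once this separation is secured, each of the two cases closes by pitting a quantity forced to $\infty$ (first case) or to $0$ (second case) against one that remains bounded, which is incompatible with \eqref{wij_eq}.
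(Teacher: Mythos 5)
Your proof is correct, and its ``only if'' direction takes a genuinely different route from the paper's. The paper (Appendix~\ref{A1}) splits into the same two cases, $\pa(i)\not\subseteq A_i$ and $A_i\cap\de(i)\neq\emptyset$, but in the second case it puts generic nonzero weights on \emph{all} edges of a minimal directed path from $i$ into $A_i$ and evaluates both sides of \eqref{wij_eq} exactly; since the value of the right-hand side then depends on whether that path passes through $j$ and on how $A_j$ meets $\de(j)$, the paper needs three further subcases (path avoids $j$; path hits $j$ and $A_j\cap\de(j)\neq\emptyset$; the remainder), one of which loops back to the first case with $i$ and $j$ swapped. You avoid this branching entirely through two devices: placing the blow-up weight $a$ only on the first edge $i\to v_1$, which—being an incoming edge of $v_1$ and $j\neq i$—can never be an outgoing edge of $j$; and replacing exact evaluation of the right-hand side by the uniform two-sided bounds $1/(\Sigma^{-1})_{jj}\le\var[X_j\mid X_{A_j}]\le\sigma_{jj}$, valid for \emph{any} $A_j$ and bounded away from $0$ and $\infty$ uniformly in the free parameter. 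Pitting a quantity forced to $0$ (resp.\ $\infty$) against a uniformly bounded one then closes each case with no information about $A_j$, and monotonicity of the Gaussian conditional variance also spares you the paper's minimality assumption on the path (interior nodes outside $A_i$). What the paper's computation buys in exchange is an exact contradiction for any single nonzero choice of the weights, reusing the calculation from Theorem~\ref{wii-rev_thm} verbatim, whereas your argument is asymptotic; but your version is shorter and more robust.
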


Theorem \ref{iden_thm} yields a full algebraic characterization of partially homoscedastic linear Gaussian models. Every equal variance condition corresponds to a collection of equations between conditional variances, in which conditioning sets may be taken from a range of sets.  The different conditioning sets will ultimately lead to equivalent constraints once the equal variance constraints are combined with conditional independence constraints.

We now record an observation that will be important for later considerations of model equivalence.  It refers to the smallest and largest conditioning sets, where we partially order sets by set inclusion and extend the ordering lexicographically to pairs of sets; i.e., $(A_i,A_j)\le (B_i,B_j)$ if $A_i\subsetneq B_i$ or if $A_i=B_i$ and $A_j\subseteq B_j$.

\begin{corollary}
\label{cor:Aij}
Let $G$ be a DAG, and let $\Pi$ be a partition of $V$ such that $i\sim_\Pi j$ are in the same block of the partition.  Let $\mathcal{A}_{ij}$ be the family of all pairs $(A_i,A_j)$ with $A_i\subseteq V\setminus\{i\}$ and $A_j\subseteq V\setminus\{j\}$ for which equation~\eqref{wij_eq}, i.e., \[
\sigma_{ii}-\Sigma_{i,A_i}(\Sigma_{A_i,A_i})^{-1}\Sigma_{A_i,i}=\sigma_{jj}-\Sigma_{j,A_j}(\Sigma_{A_j,A_j})^{-1}\Sigma_{A_j,j},\]
holds for all covariance matrices $\Sigma\in M_{G,\Pi}$.  Then
\begin{itemize}
    \item[(i)] $\mathcal{A}_{ij}$ contains a unique minimal pair, namely, $A_i=\pa(i)$ and $A_j=\pa(j)$, and
    \item[(ii)] $\mathcal{A}_{ij}$ contains a unique maximal pair, namely, $B_i=V\backslash\de(i)$ and $B_j=V\backslash\de(j)$.
\end{itemize}
\end{corollary}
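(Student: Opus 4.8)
The plan is to read $\mathcal{A}_{ij}$ off directly from Theorem~\ref{iden_thm}. That theorem states that a pair $(A_i,A_j)$ with $A_i\subseteq V\setminus\{i\}$ and $A_j\subseteq V\setminus\{j\}$ belongs to $\mathcal{A}_{ij}$ if and only if $\pa(i)\subseteq A_i\subseteq V\setminus\de(i)$ and $\pa(j)\subseteq A_j\subseteq V\setminus\de(j)$. In other words, $\mathcal{A}_{ij}$ is exactly the Cartesian product of the two intervals $[\pa(i),V\setminus\de(i)]$ and $[\pa(j),V\setminus\de(j)]$ in the Boolean lattice of subsets of $V$. Once this identification is in hand, both claims reduce to elementary order-theoretic bookkeeping.

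First I would check that the two candidate pairs actually lie in $\mathcal{A}_{ij}$. For this one only needs the basic DAG fact that a parent of a node cannot also be a descendant of it, i.e.\ $\pa(i)\cap\de(i)=\emptyset$, so that $\pa(i)\subseteq V\setminus\de(i)$ (and likewise with $j$ in place of $i$). Hence both $(\pa(i),\pa(j))$ and $(V\setminus\de(i),V\setminus\de(j))$ satisfy the inclusions required by Theorem~\ref{iden_thm} and therefore belong to $\mathcal{A}_{ij}$.

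Next I would prove the stronger statements that $(\pa(i),\pa(j))$ is the \emph{least} element and $(V\setminus\de(i),V\setminus\de(j))$ is the \emph{greatest} element of $\mathcal{A}_{ij}$ for the lexicographic extension of set inclusion; since a least (respectively greatest) element is in particular the unique minimal (respectively maximal) element, this settles (i) and (ii) at once. Take an arbitrary $(A_i,A_j)\in\mathcal{A}_{ij}$. From $\pa(i)\subseteq A_i$ we are in one of two cases for the first coordinate: either $\pa(i)\subsetneq A_i$, in which case the first clause of the order definition already yields $(\pa(i),\pa(j))\le(A_i,A_j)$; or $\pa(i)=A_i$, in which case $\pa(j)\subseteq A_j$ together with the second clause yields $(\pa(i),\pa(j))=(A_i,\pa(j))\le(A_i,A_j)$. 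Either way $(\pa(i),\pa(j))\le(A_i,A_j)$, so this pair is the least element of $\mathcal{A}_{ij}$. The argument for $(V\setminus\de(i),V\setminus\de(j))$ being the greatest element is the mirror image, splitting on whether $A_i\subsetneq V\setminus\de(i)$ or $A_i=V\setminus\de(i)$ and using $A_j\subseteq V\setminus\de(j)$ in the latter case.

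I do not expect any real obstacle here; the single point that needs a little care is to split on whether the first coordinates coincide, since the lexicographic order compares the second coordinates only in that event. Everything else is immediate from Theorem~\ref{iden_thm} and the definition of the order.
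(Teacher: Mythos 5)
Your proposal is correct and follows exactly the route the paper intends: Corollary~\ref{cor:Aij} is stated as an immediate consequence of Theorem~\ref{iden_thm}, which identifies $\mathcal{A}_{ij}$ as the product of the intervals $[\pa(i),V\setminus\de(i)]$ and $[\pa(j),V\setminus\de(j)]$, and your order-theoretic case split on whether the first coordinates coincide correctly handles the lexicographic comparison. The membership check via $\pa(i)\cap\de(i)=\emptyset$ is the right (and only) graph-theoretic input needed.
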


\subsection{Characterization of the models}

In Section \ref{eqvar_csts} we considered a class of algebraic constraints that require equality of conditional variances, and we characterized which of these constraints hold in a given partially homoscedastic linear model.
As we show in this section, combining the variance constraints with conditional independence constraints from $d$-separation relations yields an implicit algebraic description of partially homoscedastic linear models.  We begin by revisiting the original proof of \citet[Theorems 1 and 3]{geiger1990logic} for soundness and completeness of $d$-separation in SEMs, which via a slight modification also applies to our specialized setting.  In other words, we clarify in the following proposition that equal variance assumptions do not alter the set of conditional independence relations in a linear SEM.

\begin{proposition}\label{ci_sndcomp}
Let $G=(V,E)$ be a DAG, and let $\Pi$ be a partition of $V$.  Let $i,j$ be two distinct nodes, and let $S\subseteq V\setminus\{i,j\}$.  Then the conditional independence $X_i\ci X_j\mid X_S$ holds for all multivariate normal random vectors $X$ with covariance matrix in $M_{G,\Pi}$ if and only if the $d$-separation $i\perp_d j\mid S$ holds in $G$.
\end{proposition}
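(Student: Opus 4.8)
The plan is to show both directions of the equivalence by reducing to the already-established theory for linear Gaussian models with arbitrary error variances (Theorem~\ref{thm:conditional_independence_dsep}, parts (i) and (ii)). The key point is that $M_{G,\Pi}$ is a subset of $M_G$ obtained by imposing the linear constraints $\omega_{ii}=\omega_{jj}$ whenever $i\sim_\Pi j$, and that this subset is, in a suitable sense, ``large enough'' inside $M_G$ to detect exactly the same polynomial (conditional independence) constraints.

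For the ``if'' direction, suppose $i\perp_d j\mid S$ holds in $G$. By Theorem~\ref{thm:conditional_independence_dsep}(ii), the constraint $\det(\Sigma_{iS,jS})=0$ holds for \emph{all} $\Sigma\in M_G$, and in particular for all $\Sigma\in M_{G,\Pi}\subseteq M_G$; by part (i) of the same theorem this is equivalent to $X_i\ci X_j\mid X_S$. So this direction is immediate and requires no new work.

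The substantive direction is ``only if'': assuming $i$ and $j$ are \emph{not} $d$-separated by $S$ in $G$, we must exhibit at least one $\Sigma\in M_{G,\Pi}$ for which $X_i\nci X_j\mid X_S$, i.e., $\det(\Sigma_{iS,jS})\neq 0$. The idea is to revisit the Geiger--Pearl completeness argument: for a DAG with arbitrary variances one constructs, for any non-$d$-separation, a specific parameter choice $(\Lambda,\boldsymbol{\omega})$ witnessing the dependence. The standard such construction (as in \citet{geiger1990logic}) produces a dependence that persists under generic perturbation of the parameters — equivalently, $\det(\Sigma_{iS,jS})$ is a polynomial in $(\Lambda,\boldsymbol{\omega})$ that does not vanish identically on $\mathbb{R}^E\times(0,\infty)^V$. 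Since the parameterization of $M_{G,\Pi}$ is obtained by restricting $\boldsymbol{\omega}$ to the linear subspace $L_\Pi=\{\boldsymbol{\omega}:\omega_{ii}=\omega_{jj}\text{ when }i\sim_\Pi j\}$, which meets the positive orthant in a set of full dimension within $L_\Pi$, it suffices to check that this polynomial does not vanish identically on $\mathbb{R}^E\times (L_\Pi\cap(0,\infty)^V)$. I would argue this by noting that the coefficient edge weights $\Lambda$ can be chosen freely and independently of $\boldsymbol{\omega}$: fixing $\boldsymbol{\omega}$ to be, say, the all-ones vector (which lies in $L_\Pi$ for \emph{any} partition), the resulting map $\Lambda\mapsto\det(\phi_G(\Lambda,\mathbf{1})_{iS,jS})$ is still a nonzero polynomial in $\Lambda$, because the Geiger--Pearl witness can be taken with all error variances equal — their construction fixes a directed active path and the relevant trek monomials survive regardless of the $\omega$'s, which only contribute positive factors. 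Hence there is a choice of $\Lambda\in\mathbb{R}^E$ with $\Sigma=\phi_G(\Lambda,\mathbf{1})\in M_{G,\Pi}$ and $\det(\Sigma_{iS,jS})\neq 0$, so $X_i\nci X_j\mid X_S$ for that $\Sigma$.

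The main obstacle is making precise the claim that the completeness witness survives the restriction to equal (within blocks) error variances. The cleanest route is to revisit the explicit construction underlying \citet[Theorems 1 and 3]{geiger1990logic}: when $i,j$ are $d$-connected given $S$, pick an active path, assign nonzero weights to its edges and zero to all others, and observe via the trek rule (Theorem~\ref{trekrule}) that the $(i,j)$-entry of $\Sigma$ picks up a nonzero contribution while the relevant minor $\det(\Sigma_{iS,jS})$ becomes a nonzero function of those edge weights for \emph{every} fixed positive $\boldsymbol{\omega}$ — in particular for $\boldsymbol{\omega}$ constant, which is admissible under any $\Pi$. One must be a little careful that the chosen $\Lambda$ keeps $I-\Lambda$ invertible (automatic for a DAG) and that the minor is genuinely not identically zero as a function of the free edge weights; both follow from the algebraic independence of distinct trek monomials exploited in the classical proof. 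Once this is in place, the proposition follows, and I would phrase the write-up as: ``One direction is Theorem~\ref{thm:conditional_independence_dsep}; for the converse, the completeness construction of \citet{geiger1990logic} can be carried out with a constant error-variance vector, which lies in $M_{G,\Pi}$ for every partition $\Pi$, so it also witnesses the failure of the conditional independence within $M_{G,\Pi}$.''
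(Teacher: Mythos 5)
Your proposal is correct and follows essentially the same route as the paper: the ``if'' direction is the same containment argument $M_{G,\Pi}\subseteq M_G$, and for the ``only if'' direction the paper likewise modifies the Geiger--Pearl active-path witness by setting all error variances equal to one (admissible under any partition) and nonzero weights only on the path edges, then verifies $\det(\Sigma_{iS,jS})\neq 0$ by an explicit computation of the resulting structured covariance matrix. The only difference is one of detail: where you appeal to the nonvanishing of the minor as a polynomial in the free edge weights, the paper computes the determinant explicitly as $\rho^{\sum_t r_t}$.
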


\begin{proof}
The ``if'' follows from Theorem~\ref{thm:conditional_independence_dsep} because $M_{G,\Pi}\subseteq M_G$.

For the ``only if'', suppose that $i$ and $j$ are not $d$-separated by $S$.  We then have to construct an example of $\Sigma\in M_{G,\Pi}$ in which the conditional independence does not hold, i.e., $\det(\Sigma_{iS,jS})\not=0$.  To this end, we may slightly modify an example of \citet{geiger1990logic}.  The modification uses equal error variances to ensure $\Sigma$ is in $M_{G,\Pi}$ and not merely in $M_G$. We provide the construction details in Appendix \ref{A2}.
\end{proof}

All ingredients in place, we can now fully describe a partially homoscedastic linear Gaussian model in terms of conditional independence relations and equal variance constraints.  Evidently, all constraints are algebraic, i.e., can be expressed in terms of polynomials.

\begin{theorem}\label{ideal}
Let $G=(V,E)$ be a DAG, and let $\Pi$ be a partition of $V$.  Then a covariance matrix $\Sigma\in\mathit{PD}$ is in the partially homoscedastic linear model $M_{G,\Pi}$ if and only if $\Sigma$ satisfies all 
conditional independence constraints given by $d$-separations and all 
equal variance constraints from Corollary~\ref{wij_eq_col}.
\end{theorem}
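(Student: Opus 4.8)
The plan is to prove the two inclusions of sets of positive definite matrices. One direction is immediate: if $\Sigma\in M_{G,\Pi}$, then $\Sigma\in M_G$, so by Theorem~\ref{thm:conditional_independence_dsep}(iii) it satisfies all conditional independence constraints coming from $d$-separations in $G$, and by Corollary~\ref{wij_eq_col} it also satisfies all equal variance constraints \eqref{wij_eq} for pairs $i\sim_\Pi j$. So the work is entirely in the converse.

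For the converse, suppose $\Sigma\in\mathit{PD}$ satisfies all $d$-separation constraints and all equal variance constraints. First I would use Theorem~\ref{thm:conditional_independence_dsep}(iii): since $\Sigma$ satisfies every constraint $\det(\Sigma_{iS,jS})=0$ attached to a $d$-separation $i\perp_d j\mid S$ in $G$, we already know $\Sigma\in M_G$. Hence there exist $\Lambda\in\mathbb{R}^E$ and $\boldsymbol{\omega}\in(0,\infty)^V$ with $\Sigma=\phi_G(\Lambda,\boldsymbol{\omega})$. It remains to show that one may take $\boldsymbol{\omega}$ to be constant on the blocks of $\Pi$, i.e.\ $\omega_{ii}=\omega_{jj}$ whenever $i\sim_\Pi j$. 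The key point is that for a DAG the parametrization is essentially injective on the error variances: by Theorem~\ref{wii_thm}, each $\omega_{ii}$ is determined by $\Sigma$ via the formula $\omega_{ii}=\sigma_{ii}-\Sigma_{i,\pa(i)}(\Sigma_{\pa(i),\pa(i)})^{-1}\Sigma_{\pa(i),i}$, taking $A=\pa(i)$. Therefore the recovered $\boldsymbol{\omega}$ is uniquely pinned down by $\Sigma$ and $G$, with no freedom. Now apply the equal variance hypothesis: for any $i\sim_\Pi j$, the hypothesis says \eqref{wij_eq} holds for $\Sigma$ in particular for the choice $A_i=\pa(i)$, $A_j=\pa(j)$ (which is admissible since $\pa(i)\subseteq\pa(i)\subseteq V\setminus\de(i)$, and likewise for $j$). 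The left side of \eqref{wij_eq} is exactly $\omega_{ii}$ and the right side is exactly $\omega_{jj}$ by Theorem~\ref{wii_thm}. Hence $\omega_{ii}=\omega_{jj}$ for all $i\sim_\Pi j$, so $\boldsymbol{\omega}$ respects the partition and $\Sigma=\phi_G(\Lambda,\boldsymbol{\omega})\in M_{G,\Pi}$.

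The conceptual core, and the only place where anything nontrivial happens, is the step that reads the abstract equality hypothesis \eqref{wij_eq} back as the concrete equality $\omega_{ii}=\omega_{jj}$; this works precisely because Theorem~\ref{wii_thm} identifies both conditional variances with honest error variances of the (unique) underlying parametrization, and because $\pa(i)$ is always a legal conditioning set. No genuine obstacle arises: once $\Sigma\in M_G$ is known, there is no need to re-derive a parametrization from scratch or to worry about multiple parametrizations, since the error variances are uniquely recoverable. One small matter to state carefully is that the equal variance constraints in Corollary~\ref{wij_eq_col} are indexed over all admissible pairs $(A_i,A_j)$, but for this proof only the single pair of parent sets is needed; conversely Corollary~\ref{cor:Aij} guarantees these are all mutually consistent, so nothing is lost either way.
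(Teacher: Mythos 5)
Your proposal is correct and follows essentially the same route as the paper: the easy direction via $M_{G,\Pi}\subseteq M_G$ together with Corollary~\ref{wij_eq_col}, and the converse by first invoking Theorem~\ref{thm:conditional_independence_dsep}(iii) to place $\Sigma$ in $M_G$ and then reading the equal variance constraint with $A_i=\pa(i)$, $A_j=\pa(j)$ as $\omega_{ii}=\omega_{jj}$ via Theorem~\ref{wii_thm}. The only cosmetic difference is that the paper cites Proposition~\ref{ci_sndcomp} for the easy direction where you cite Theorem~\ref{thm:conditional_independence_dsep} directly; the content is the same.
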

\begin{proof}
The ``if'' follows from Proposition~\ref{ci_sndcomp} and Corollary~\ref{wij_eq_col}.  For the ``only if'',  let $\Sigma$ satisfy all conditional independence and equal variance constraints associated to $G$.  By Theorem~\ref{thm:conditional_independence_dsep}(iii), a covariance matrix that satisfies all conditional independence constraints given by $d$-separation has to be an element of $M_G$.  
Hence, there exist $\Lambda\in\mathbb{R}^E$ and $\boldsymbol{\omega}\in(0,\infty)^V$ such that  $\Sigma=\phi_G(\Lambda,\boldsymbol{\omega})\in M_G$.  But then, by Theorem~\ref{wii_thm}, the equalities among conditional variances imply that $\omega_{ii}=\omega_{jj}$ for $i\sim_\Pi j$.  Therefore, $\Sigma\in M_{G,\Pi}$.
\end{proof}

\section{Equivalence classes}\label{sec_eq}

\subsection{Model equivalence of DAGs}

Let $G_1=(V,E_1)$ and $G_2=(V,E_2)$ be two DAGs with the same given vertex set.  An important problem for causal discovery is to decide whether the two DAGs are equivalent in the sense of defining the same statistical model for the observations at hand.

\begin{definition}
\label{def:model:equivalence}
Let $\Pi$ be a fixed partition of the index set $V$.  Two DAGs $G_1=(V,E_1)$ and $G_2=(V,E_2)$ are \emph{$\Pi$-model equivalent} if $M_{G_1,\Pi}=M_{G_2,\Pi}$.  In this case, we write $G_1\approx_\Pi G_2$.
\end{definition}

The classic case of linear SEMs with arbitrary error variances corresponds to the partition $\Pi_{\min}=\{\{i\}:i\in V\}$.  In this case we have no equal variance constraints, and $G_1$ and $G_2$ are $\Pi_{\min}$-model equivalent if and only if $G_1$ and $G_2$ are Markov equivalent, meaning they induce the same conditional independence relations or, equivalently, have the same $d$-separation relations.  Graphical models theory further tells us that $G_1$ and $G_2$ are Markov equivalent if and only if they have the same skeleton and unshielded collider triples \citep{handbook:studeny}.

Based on the algebraic characterization in Theorem~\ref{ideal}, we are able to give the following extension of the Markov equivalence theory to the setting of partially homoscedastic models.

\begin{theorem}\label{equi_cl}
Let $G_1=(V,E_1)$ and $G_2=(V,E_2)$ be two DAGs, and let $\Pi=\{\pi_1,\dots,\pi_K\}$ be a partition of the index set $V$.  Then $G_1$ and $G_2$ are $\Pi$-model equivalent if and only if the following two conditions hold:
\begin{enumerate}[label=(\roman*)]
\item $G_1$ and $G_2$ have the same skeleton and unshielded colliders, and
\item $\pa_{G_1}(i)=\pa_{G_2}(i)$ for all nodes $i$ that belong to a partition block $\pi_k$ of size $|\pi_k|\geq 2$.
\end{enumerate}
\end{theorem}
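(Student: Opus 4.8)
The plan is to leverage the implicit description in Theorem~\ref{ideal}: the model $M_{G,\Pi}$ is cut out by the $d$-separation conditional independence constraints together with the equal-variance constraints attached to each block $\pi_k$ with $|\pi_k|\ge 2$. So $M_{G_1,\Pi}=M_{G_2,\Pi}$ amounts to the two constraint systems having the same zero set inside $\mathit{PD}$. I would prove the two directions separately.

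For the ``if'' direction, assume (i) and (ii). By the classical Markov equivalence theorem, (i) implies $G_1$ and $G_2$ induce exactly the same $d$-separation relations, hence the same conditional independence part of the implicit description. It remains to match the equal-variance constraints. Fix a block $\pi_k$ with $|\pi_k|\ge 2$ and two nodes $i\sim_\Pi j$ in it. By Corollary~\ref{wij_eq_col} (or Corollary~\ref{cor:Aij}), it suffices to use the minimal conditioning sets $A_i=\pa(i)$, $A_j=\pa(j)$; by (ii) these parent sets agree in $G_1$ and $G_2$, so the polynomial $\sigma_{ii}-\Sigma_{i,\pa(i)}(\Sigma_{\pa(i),\pa(i)})^{-1}\Sigma_{\pa(i),i} - \big(\sigma_{jj}-\Sigma_{j,\pa(j)}(\Sigma_{\pa(j),\pa(j)})^{-1}\Sigma_{\pa(j),j}\big)$ is literally the same constraint for both DAGs. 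Running this over all blocks of size $\ge 2$ and all pairs within them shows the two implicit descriptions coincide, so $M_{G_1,\Pi}=M_{G_2,\Pi}$ by Theorem~\ref{ideal}.

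For the ``only if'' direction, assume $M_{G_1,\Pi}=M_{G_2,\Pi}$. Taking $\Pi=\Pi_{\min}$-style reasoning aside, I would first recover (i): since $M_{G_1,\Pi}\subseteq M_{G_1}$ and likewise for $G_2$, and since the Zariski closure / dimension arguments let one read off the conditional independence ideal, Proposition~\ref{ci_sndcomp} gives that $G_1$ and $G_2$ have the same $d$-separations, hence the same skeleton and unshielded colliders by the classical theorem. The substantive new content is (ii). Suppose for contradiction that some node $i$ lies in a block $\pi_k$ with $|\pi_k|\ge 2$ but $\pa_{G_1}(i)\ne\pa_{G_2}(i)$. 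Since the skeletons agree, the set of neighbors of $i$ is the same in both graphs, so there is a neighbor $m$ with, say, $m\to i$ in $G_1$ but $i\to m$ in $G_2$. The idea is to exhibit a covariance matrix in one model but not the other. Take another node $j$ with $i\sim_\Pi j$ (exists since $|\pi_k|\ge 2$). In $M_{G_1,\Pi}$ the equal-variance constraint ties the conditional variance of $X_i$ given its $G_1$-parents (which include $m$) to that of $X_j$; in $M_{G_2,\Pi}$ it ties the conditional variance of $X_i$ given its $G_2$-parents (which exclude $m$, but include possibly other nodes) to that of $X_j$. Using Theorem~\ref{wii-rev_thm}, the set of valid conditioning sets for identifying $\omega_{ii}$ differs between the two DAGs precisely because $\pa(i)$ and $\de(i)$ change; this lets me choose parameters (e.g., путём a construction analogous to those in the proofs of Theorems~\ref{wii-rev_thm} and \ref{iden_thm}, activating the edge at $m$ while zeroing others) producing a $\Sigma$ that satisfies all of $G_1$'s constraints but violates one of $G_2$'s equal-variance constraints, contradicting $M_{G_1,\Pi}=M_{G_2,\Pi}$.

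The main obstacle will be the last step: carefully engineering the parameter choice that separates the two models. One must ensure the candidate $\Sigma$ genuinely lies in $M_{G_1,\Pi}$ (all of $G_1$'s $d$-separations \emph{and} all its equal-variance equalities, across \emph{every} pair in every block, hold) while failing a specific constraint of $M_{G_2,\Pi}$ — the bookkeeping is delicate because changing the orientation of a single edge at $i$ can cascade through ancestor/descendant sets and affect which conditioning sets are admissible for \emph{other} nodes in the same block. I expect the cleanest route is to reduce to a minimal configuration: restrict attention to the sub-DAG on $\{i,j,m\}$ and the relevant ancestors, invoke genericity (the separating inequality holds on a Zariski-dense set, so a valid choice exists), and cite the parameter-perturbation technique already used in Theorem~\ref{wii-rev_thm}. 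A subtlety to handle explicitly is the case where $j$ too is adjacent to $m$ or where $m$ itself lies in the block $\pi_k$, which may require choosing $j$ more carefully or treating these as special sub-cases.
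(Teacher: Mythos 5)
Your ``if'' direction is essentially the paper's argument and is sound: condition (i) gives the same $d$-separations, hence $M_{G_1}=M_{G_2}$, and condition (ii) makes the minimal equal-variance constraints (with conditioning sets $\pa(i),\pa(j)$) literally identical for the two graphs, which suffices to transfer membership between $M_{G_1,\Pi}$ and $M_{G_2,\Pi}$. Your recovery of condition (i) in the converse from Proposition~\ref{ci_sndcomp} is also fine.

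The genuine gap is in your derivation of condition (ii). You reduce it to exhibiting a covariance matrix $\Sigma\in M_{G_1,\Pi}\setminus M_{G_2,\Pi}$ whenever $\pa_{G_1}(i)\neq\pa_{G_2}(i)$ for some $i$ in a block of size at least $2$, but you never actually construct such a $\Sigma$; you acknowledge yourself that the bookkeeping is delicate (the candidate must satisfy every $d$-separation of $G_1$ and every equal-variance equality in every block while violating a specific constraint of $G_2$, and reorienting a single edge changes ancestor and descendant sets globally), and you fall back on an unsubstantiated appeal to genericity. That construction is precisely the hard content of the statement, so the proof is incomplete as written. The paper sidesteps the construction entirely: since $M_{G_1,\Pi}=M_{G_2,\Pi}$, the family $\mathcal{A}_{ij}$ of admissible conditioning-set pairs from Corollary~\ref{cor:Aij} is defined purely in terms of the model set and is therefore the same for both graphs; Theorem~\ref{iden_thm} identifies this family as $\{(A_i,A_j):\pa(i)\subseteq A_i\subseteq V\setminus\de(i),\ \pa(j)\subseteq A_j\subseteq V\setminus\de(j)\}$, whose unique minimal element is $(\pa(i),\pa(j))$, forcing $\pa_{G_1}(i)=\pa_{G_2}(i)$ and $\pa_{G_1}(j)=\pa_{G_2}(j)$. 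The delicate parameter perturbations you anticipate are exactly what the appendix proof of Theorem~\ref{iden_thm} already carries out (including the awkward subcases where the relevant directed paths pass through $j$); the correct move here is to cite that result rather than attempt to reprove a special case of it inside this proof.
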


\begin{proof}
For the ``if'' direction, suppose that conditions (i) and (ii) hold.  By the standard Markov equivalence theory, condition (i) implies that $G_1$ and $G_2$ have the same $d$-separation relations and, thus, $M_{G_1}=M_{G_2}$.   Now, let $\Sigma$ be an arbitrary element of $M_{G_1,\Pi}$.  Since $M_{G_1,\Pi}\subseteq M_{G_1}=M_{G_2}$, there is a (unique) choice of $\Lambda^{(2)}\in\mathbb{R}^{E_2}$ and $\boldsymbol{\omega}^{(2)}\in(0,\infty)^V$ such that $\Sigma=\phi_{G_2}(\Lambda^{(2)},\boldsymbol{\omega}^{(2)})$.  Let $i\not=j$ be any two nodes with $i\sim_\Pi j$, i.e., there is a partition block $\pi_k$ of size $|\pi_k|\ge 2$ that contains both $i,j$.   By Corollary~\ref{wij_eq_col}, since $\Sigma\in M_{G_1,\Pi}$, we have
\begin{align*}
    \sigma_{ii}-\Sigma_{i,\pa_{G_1}(i)}(\Sigma_{\pa_{G_1}(i),\pa_{G_1}(i)})^{-1}\Sigma_{\pa_{G_1}(i),i}=\sigma_{jj}-\Sigma_{j,\pa_{G_1}(j)}(\Sigma_{\pa_{G_1}(j),\pa_{G_1}(j)})^{-1}\Sigma_{\pa_{G_1}(j),j}.
\end{align*}
By condition (ii), $\pa_{G_1}(i)=\pa_{G_2}(i)$ and $\pa_{G_1}(j)=\pa_{G_2}(j)$.  Therefore, we have
\begin{align*}
  \omega^{(2)}_{ii} &= \sigma_{ii}-\Sigma_{i,\pa_{G_2}(i)}(\Sigma_{\pa_{G_2}(i),\pa_{G_2}(i)})^{-1}\Sigma_{\pa_{G_2}(i),i}\\
  &=\sigma_{jj}-\Sigma_{j,\pa_{G_2}(j)}(\Sigma_{\pa_{G_2}(j),\pa_{G_2}(j)})^{-1}\Sigma_{\pa_{G_2}(j),j} =\omega^{(2)}_{jj}.
\end{align*}
We conclude that $\Sigma\in M_{G_2,\Pi}$ and, thus, $M_{G_1,\Pi}\subseteq M_{G_2,\Pi}$.  Swapping the role of $G_1$ and $G_2$, we conclude that $M_{G_1,\Pi}= M_{G_2,\Pi}$ and $G_1\approx_\Pi G_2$.

For the ``only if'' direction, suppose $M_{G_1,\Pi}= M_{G_2,\Pi}$.
Theorem~\ref{ideal} implies that $G_1$ and $G_2$ induce the same conditional independence constraints and the same set of equal variance constraints (as specified in Corollary~\ref{wij_eq_col}). We deduce that $G_1$ and $G_2$ have the same $d$-separation relations and, thus, condition (i) holds.  Let $i,j$ be any two distinct nodes in the same partition block $\pi_k$.  Since $G_1$ and $G_2$ induce the same set of equal variance constraints, the set $\mathcal{A}_{ij}$ defined in Corollary~\ref{cor:Aij} is the same for $G_1$ as for $G_2$.  Corollary~\ref{cor:Aij} now implies that the unique minimal element of $\mathcal{A}_{ij}$ must be comprised of the parent sets of node $i$ and $j$ in both $G_1$ and $G_2$.  But this means that $\pa_{G_1}(i)=\pa_{G_2}(i)$ and $\pa_{G_1}(j)=\pa_{G_2}(j)$.  Therefore, condition (ii) holds.
\end{proof}

\begin{remark}
The two extreme cases of our setup are the classic setting in which all variances are freely varying ($|\Pi|=|V|$ or in other words $\Pi=\Pi_{\min}=\{\{i\}:i\in V\}$) and the previously studied case with all variances equal ($|\Pi|=1$ or in other words $\Pi=\Pi_{\max}=\{V\}$).  When $\Pi=\Pi_{\min}$, condition (ii) in  Theorem~\ref{equi_cl} never applies and the theorem is just the classic Markov equivalence theorem.  When $\Pi=\Pi_{\max}$, condition (ii) applies to all nodes, and Theorem~\ref{equi_cl} thus recovers the fact that under an equal variance assumption no two DAGs define the same model.
\end{remark}

\begin{remark}
Another interesting special case arises in the context of two-sample problems, in which we observe each one of $d$ variables under two different experimental conditions.  In this setting, it is of interest to estimate the difference between the two DAGs for the two samples.  This problem is greatly simplified by assuming equality of the two error variances that arise in the structural equations for the two independent copies of the $k$th random variables, $k=1,\dots,d$ \citep{dif}.  We can accommodate the two-sample problem in our framework but grouping all $2d$ random variables together.  We then have a single combined graph of even size $|V|=2d$ that joins the two DAGs for the two samples.  The equal variance assumption in \citet{dif} corresponds to a partition $\Pi=\{\pi_1,\dots,\pi_d\}$ with $|\pi_1|=\cdots=|\pi_d|=2$.  Theorem \ref{equi_cl} implies that under this partition the combined DAG is uniquely determined by the joint distribution of the observations from the two samples.
\end{remark}

\subsection{Completed partially directed acyclic graph (CPDAG)}
Beyond characterizing equivalence of two DAGs as in Theorem \ref{equi_cl}, it is also of interest to provide a representation of each equivalence class.
Similar to the classic heteroscedastic setup, we can represent the equivalence class by a \emph{completed partially directed acyclic graph} (CPDAG) \citep{andersson1997characterization}.

\begin{definition}
Let $\Pi$ be a partition of the vertex set of a DAG $G=(V,E)$.
The \emph{completed partially directed acyclic graph} (CPDAG) of the DAG $G$ \emph{under partition $\Pi$} is the graph obtained by forming the union of all DAGs equivalent to $G$:
\begin{align}
G^*_\Pi := \cup\ (G'\mid G'\approx_\Pi G).
\end{align}
So, $G^*_\Pi$ contains edge $i\to j$ if the edge is contained in  some DAG $G'\approx_\Pi G$.  It is customary to draw $G^*_\Pi$ as a mixed graph with an undirected edge between nodes $i$ and $j$ for which both $i\to j$ and $j\to i$ are in $G^*_\Pi$.  
\end{definition}

We emphasize that an undirected edge in a CPDAG indicates that there exist two DAGs in the equivalence class in which the edge appears with opposite directions.  Moreover, a CPDAG contains a directed edge $i\to j$ precisely when all DAGs in the equivalence class of $G$ contain this edge. 

In the standard heteroscedastic setting (i.e., $\Pi=\Pi_{\min}=\{\{i\}:i\in V\}$), the CPDAG may be constructed using an algorithm of  \citet{meek1995causal}.  In addition, \citet{meek1995causal} shows how to construct a CPDAG in a setting where there is background knowledge about some of the edges.
 The background knowledge is of the form $
\mathcal{K}=\langle\textbf{F},\textbf{R}\rangle$, where $\textbf{F}$ contains the edges not in the DAG and $\textbf{R}$ contains the edges in the DAG. The algorithm first translates conditional independence statements into adjacencies and unshielded collider triples. Then the first 3 of the 4 orientation rules in \citet{VP92} (Figure \ref{ORs}) are applied to obtain the CPDAG without background knowledge, which is exactly the CPDAG under $\Pi_p$. The last phase incorporates background knowledge and checks whether a compatible CPDAG exists or not.  The following is the procedure, in which background knowledge is inserted edge by edge, and the CPDAG at the current step is denoted by $G^*$:
\begin{itemize}
\item [S1] If there is an edge $i\rightarrow j$ in $\textbf{F}$ such that $i\rightarrow j$ in $G^*$ then FAIL.
\item [S1$'$] If there is an edge $i\rightarrow j$ in $\textbf{R}$ such that $j\rightarrow i$ in $G^*$ or $i,j$ are not adjacent in $G^*$ then FAIL.
\item [S2] Randomly choose one edge $i\rightarrow j$ from $\textbf{R}$, and let $\textbf{R}=\textbf{R}\backslash\{i\rightarrow j\}$.
\item [S3] Orient $i\rightarrow j$ in $G^*$ and close orientations under rules R1, R2, R3 and R4 in Figure \ref{ORs}.
\item [S4] If $\textbf{R}\not=\emptyset$, then go to step S1.
\end{itemize}

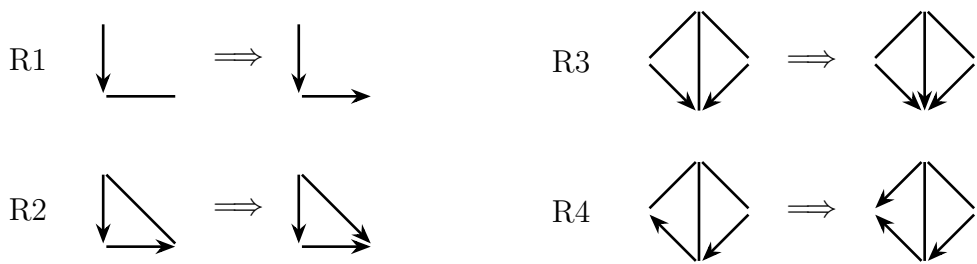
\begin{figure}[htbp]
\centering
\begin{minipage}[c]{0.48\textwidth}
\centering
\begin{tikzpicture}
[main_node/.style={draw=none,fill=none,inner sep=1pt}]
\node[main_node] (1) at (0, 4) {R1};
\node[main_node] (2) at (1, 4.5) {};
\node[main_node] (3) at (1, 3.5) {};
\node[main_node] (4) at (2, 3.5) {};
\node[main_node] (5) at (2.8, 4) {$\implies$};
\node[main_node] (6) at (3.6, 4.5) {};
\node[main_node] (7) at (3.6, 3.5) {};
\node[main_node] (8) at (4.6, 3.5) {};

\node[main_node] (9) at (0, 2) {R2};
\node[main_node] (10) at (1, 2.5) {};
\node[main_node] (11) at (1, 1.5) {};
\node[main_node] (12) at (2, 1.5) {};
\node[main_node] (13) at (2.8, 2) {$\implies$};
\node[main_node] (14) at (3.6, 2.5) {};
\node[main_node] (15) at (3.6, 1.5) {};
\node[main_node] (16) at (4.6, 1.5) {};

\draw [-{Stealth}, line width=1pt] (2) to (3);
\draw [-, line width=1pt] (3) to (4);
\draw [-{Stealth}, line width=1pt] (6) to (7);
\draw [-{Stealth}, line width=1pt] (7) to (8);

\draw [-{Stealth}, line width=1pt] (10) to (11);
\draw [-, line width=1pt] (10) to (12);
\draw [-{Stealth}, line width=1pt] (11) to (12);
\draw [-{Stealth}, line width=1pt] (14) to (15);
\draw [-{Stealth}, line width=1pt] (15) to (16);
\draw [-{Stealth}, line width=1pt] (14) to (16);

\end{tikzpicture}
\end{minipage}
\centering 
\begin{minipage}[c]{0.48\textwidth}
\centering
\begin{tikzpicture}
[main_node/.style={draw=none,fill=none,inner sep=1pt}]
\node[main_node] (1) at (0, 4) {R3};
\node[main_node] (2) at (1, 4) {};
\node[main_node] (3) at (1.7, 4.7) {};
\node[main_node] (4) at (2.4, 4) {};
\node[main_node] (5) at (1.7, 3.3) {};

\node[main_node] (6) at (3.2, 4) {$\implies$};
\node[main_node] (7) at (4, 4) {};
\node[main_node] (8) at (4.7, 4.7) {};
\node[main_node] (9) at (5.4, 4) {};
\node[main_node] (10) at (4.7, 3.3) {};

\node[main_node] (11) at (0, 2) {R4};
\node[main_node] (12) at (1, 2) {};
\node[main_node] (13) at (1.7, 2.7) {};
\node[main_node] (14) at (2.4, 2) {};
\node[main_node] (15) at (1.7, 1.3) {};

\node[main_node] (16) at (3.2, 2) {$\implies$};
\node[main_node] (17) at (4, 2) {};
\node[main_node] (18) at (4.7, 2.7) {};
\node[main_node] (19) at (5.4, 2) {};
\node[main_node] (20) at (4.7, 1.3) {};

\draw [-, line width=1pt] (2) to (3);
\draw [-{Stealth}, line width=1pt] (2) to (5);
\draw [-, line width=1pt] (3) to (4);
\draw [-{Stealth}, line width=1pt] (4) to (5);
\draw [-, line width=1pt] (3) to (5);

\draw [-, line width=1pt] (7) to (8);
\draw [-{Stealth}, line width=1pt] (7) to (10);
\draw [-, line width=1pt] (8) to (9);
\draw [-{Stealth}, line width=1pt] (9) to (10);
\draw [-{Stealth}, line width=1pt] (8) to (10);

\draw [-, line width=1pt] (12) to (13);
\draw [-{Stealth}, line width=1pt] (15) to (12);
\draw [-, line width=1pt] (13) to (14);
\draw [-{Stealth}, line width=1pt] (14) to (15);
\draw [-, line width=1pt] (13) to (15);

\draw [-{Stealth}, line width=1pt] (18) to (17);
\draw [-{Stealth}, line width=1pt] (20) to (17);
\draw [-, line width=1pt] (18) to (19);
\draw [-{Stealth}, line width=1pt] (19) to (20);
\draw [-, line width=1pt] (18) to (20);

\end{tikzpicture}
\end{minipage}
\caption{The four orientation rules.}
\label{ORs}
\end{figure}

In our setup, we want to compute the equivalence class 
of a DAG under a partition, which restricts edge orientations in the DAG's equivalence class.  These restrictions can be interpreted as providing background knowledge as considered by Meek. In our case, a CPDAG compatible to the background knowledge always exists, and we can use a simplified version of the general algorithm to construct the equivalence class. 

Given a DAG $G$ and a partition $\Pi$, the equivalence class is obtained by the following algorithm (Algorithm \ref{alg:1}). Theorem \ref{alg_thm} below certifies the correctness of the algorithm. The proof is deferred to Appendix \ref{A3}.

\begin{algorithm}
\caption{Constructing the equivalence class of a DAG, given the partition}\label{alg:1}
\begin{algorithmic}[1]
\Require A DAG $G$, the partition $\Pi$
\State Create an empty graph $G'$
\State Copy the skeleton and all edge orientations with unshielded colliders of $G$ to $G'$
\State Apply rules R1, R2 and R3 on $G'$ until no more edges can be oriented
\For{$i\in V$ with $i\in\pi_k$ and $|\pi_k|\geq 2$}
\State Copy the orientation of edges in $G$ having one endpoint at $i$ to $G'$
\EndFor
\State Apply rules R1 and R2 on $G'$ until no more edges can be oriented\\
\Return $G^*_\Pi=G'$
\end{algorithmic}
\end{algorithm}

\begin{theorem}\label{alg_thm}
Given a DAG $G$ and partition $\Pi$, Algorithm \ref{alg:1} outputs the CPDAG $G^*_\Pi$. 
\end{theorem}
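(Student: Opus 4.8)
I would prove the theorem by checking that the graph $G'$ returned by Algorithm~\ref{alg:1}, which clearly carries the skeleton of $G$, has the two defining properties of $G^*_\Pi=\cup\,(G''\mid G''\approx_\Pi G)$: \emph{soundness}, that every arrow of $G'$ is present in every DAG $\Pi$-model equivalent to $G$; and \emph{completeness}, that for every undirected edge $i\!-\!j$ of $G'$ there is a $\Pi$-equivalent DAG containing $i\to j$ and another containing $j\to i$. Throughout I write $B=\{i\in V:i\in\pi_k,\,|\pi_k|\ge 2\}$ for the set of nodes lying in a non-singleton block, and I let $C_0$ denote the partially directed graph present after line~3; by the classical theory recalled before Figure~\ref{ORs}, $C_0$ is the CPDAG $G^*_{\Pi_{\min}}$ of the Markov equivalence class of $G$, so it has the skeleton and unshielded colliders of $G$ and is closed under R1, R2, R3. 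Since $C_0$ is the union of that whole Markov equivalence class and R4 is a sound rule, the premise of R4 never applies in $C_0$ either, so $C_0$ is closed under all four rules.

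\emph{Soundness.} The first step is an invariant kept at every line: the current graph has the skeleton and the unshielded colliders of $G$, and each of its arrows is oriented as in $G$. It holds after line~2; it is preserved by R1, R2, R3 in lines~3 and~7 because these rules introduce no new unshielded collider and orient only edges whose direction is common to all DAGs consistent with the current skeleton, unshielded colliders and arrows, $G$ being one of them; and it is preserved by line~5, which copies arrows straight from $G$, because any unshielded collider of the resulting graph has both of its arrows oriented as in $G$ (each arrow is either copied in line~5 or, by the invariant, was already oriented as in $G$), hence is an unshielded collider of $G$ already present after line~2. Granting the invariant, soundness follows: the line~2 arrows belong to every Markov-equivalent, hence every $\Pi$-equivalent, DAG; since every $\Pi$-equivalent DAG then remains, inductively, a consistent extension of the current graph, the rule applications in lines~3 and~7 add only arrows common to all $\Pi$-equivalent DAGs; and the line~5 arrows lie in every $\Pi$-equivalent DAG by Theorem~\ref{equi_cl}(ii), which gives $\pa_{G''}(i)=\pa_G(i)$ for all $i\in B$ and all $G''\approx_\Pi G$.

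\emph{Completeness.} I would deduce it from two facts. First, the DAGs that are consistent extensions of $G'$ (same skeleton, same unshielded colliders, all arrows of $G'$ preserved) are exactly the $\Pi$-equivalent DAGs: such an extension is Markov equivalent to $G$ and preserves the line~5 arrows, hence agrees with $G$ on $\pa(i)$ for every $i\in B$, so it is $\Pi$-equivalent by Theorem~\ref{equi_cl}; conversely every $\Pi$-equivalent DAG preserves all arrows of $G'$ by soundness. Second, $G'$ is closed under all four rules R1--R4. Granting both, the completeness half of Meek's theorem \citep{meek1995causal} tells us that each undirected edge of a rule-closed PDAG admitting a consistent extension is oriented in both directions among its consistent extensions; translated through the first fact, this is exactly the completeness statement, and therefore $G'=G^*_\Pi$. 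Closure under R1 and R2 is built into line~7. For R3, its premise would exhibit two arrowheads $l\to b\leftarrow r$ with $l\not\sim r$, i.e.\ an unshielded collider, which by the invariant is an unshielded collider of $G$ and hence already oriented in $C_0$; since the edges $l\!-\!t$, $t\!-\!r$, $t\!-\!b$ of the premise are undirected in $G'$, and therefore in $C_0$, the whole R3 premise already holds in $C_0$, contradicting its R3-closedness.

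\emph{Main obstacle.} The delicate point is closure under R4, because the two arrows $c\to d\to a$ of its premise need not form an unshielded collider, so the R3 argument does not carry over. I would proceed as follows. The premise of R4 has an undirected edge incident to each of $a,b,c,d$, so none of these nodes lies in $B$; hence neither $c\to d$ nor $d\to a$ was installed in line~5, and each is either already present in $C_0$ or was produced in line~7 by R1 or R2. If both are in $C_0$, the whole premise holds in $C_0$, and its R4-closedness forces the conclusion $b\to a$, a contradiction. Otherwise at least one of the two arrows is ``fresh''. The key lemma --- proved by induction on the order in which line~7 orients edges --- is that every arrow of $G'$ absent from $C_0$ is produced by propagating, through R1 and R2, arrows that ultimately emanate from the fully oriented neighbourhood of a node of $B$ installed in line~5; in particular such an arrow lies on a directed path of $G'$ issuing from $B$. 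Tracing the fresh premise arrow of R4 back in this way, and then exploiting that $a,b,c,d\notin B$ together with acyclicity and the R1/R2-closedness of $G'$, I would derive a contradiction, so the R4 premise cannot occur in $G'$. Making this ``emanates from $B$'' lemma and the final contradiction fully precise is the technical heart of the proof; once R4-closedness is established, $G'$ is closed under R1--R4 and $G'=G^*_\Pi$ follows as explained above.
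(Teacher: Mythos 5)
Your overall architecture is sound and, despite being phrased as a direct soundness/completeness verification rather than as a reduction to Meek's general background-knowledge procedure, it boils down to exactly the same core obligation as the paper's proof: showing that after the partition-induced orientations are inserted, the premises of R3 and R4 can never arise, so that closing under R1 and R2 alone yields a graph closed under all four rules. Your soundness invariant, your identification of the consistent extensions of $G'$ with the $\Pi$-equivalent DAGs via Theorem~\ref{equi_cl}, and your R3 argument (any R3 premise contains an unshielded collider and hence already sits in the classical CPDAG $C_0$, contradicting its rule-closedness) all match the paper's reasoning and are fine.

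The genuine gap is the R4 case, and you flag it yourself: the ``emanates from $B$'' lemma and the ``final contradiction'' are announced but not proved, and this is precisely the step that cannot be waved away. Note first that your lemma as stated is already slightly off: a line-5 arrow is merely \emph{incident} to a node of $B$ and may point \emph{into} it, so fresh arrows need not lie on directed paths issuing from $B$; moreover under R2 only one of the two premise arrows need be fresh, so the backward trace branches and does not produce a single directed path. More importantly, the eventual contradiction does not follow from acyclicity and $a,b,c,d\notin B$ in any routine way. The paper instead closes this case by a direct analysis of the \emph{first} occurrence of an R4 premise in the propagation phase, splitting on how the arrow $i_3\to i_4$ was created: it cannot come from the classical phase (else the full R4 premise would already be in $C_0$); it cannot be a line-5 arrow (else all edges at $i_3$ or $i_4$ would be oriented, in particular $i_2-i_3$ or $i_2-i_4$); and if it was produced by R1 or R2 from some auxiliary node $l$, then adjacency/orientation bookkeeping around $l$ (Figures~\ref{byR1} and~\ref{byR2}) forces an edge that either shields the triggering triple or orients $i_2-i_3$, a contradiction in each case. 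Without an argument of this concreteness, your proof of R4-closedness --- and hence of completeness --- is incomplete.
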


\begin{example}
Consider the DAG $G$ in Figure \ref{CPDAGeg} with node set $V=\{1,2,3,4,5,6\}$ and the partition $\Pi=\{\{1,2\},\{3\},\{4\},\{5\},\{6\}\}$. In other words, the partition sequence is $(1,1,2,3,4,5)$, where the $i'$th element of the sequence indicates the block that node $i$ belongs to. To determine the equivalence class of $G$, we first keep the skeleton and unshielded colliders. Then those edges containing node $1$ or $2$ (partition block size $\geq 2$) are oriented the same way as they are in $G$. Next, we propagate the edge orientation by rules R1 and R2, and we find that the edge between $4$ and $6$ is oriented as $4\rightarrow 6$. Finally, the remaining edge $3-5$ can have both direction and is kept undirected in the final CPDAG that represents the equivalence class of $G$.

\begin{figure}
\centering
\begin{minipage}[c]{0.48\textwidth}
\centering
\begin{tikzpicture}
[node1/.style={circle,fill=white,draw=brown,minimum size=2 em,inner sep=2pt, line width=1pt},
node2/.append style={circle,fill=white,draw=blue,minimum size=2 em,inner sep=2pt, line width=1pt},
node3/.append style={circle,fill=white,draw=purple,minimum size=2 em,inner sep=2pt, line width=1pt},
node4/.append style={circle,fill=white,draw=green,minimum size=2 em,inner sep=2pt, line width=1pt},
node5/.append style={circle,fill=white,draw=yellow,minimum size=2 em,inner sep=2pt, line width=1pt}
]
\node[node1] (1) at (-1, 1.732) {$1$};
\node[node1] (2) at (1, 1.732) {$2$};
\node[node2] (3) at (-2, 0) {$3$};
\node[node3] (4) at (2, 0) {$4$};
\node[node4] (5) at (-1, -1.732) {$5$};
\node[node5] (6) at (1, -1.732) {$6$};

\draw [->, line width=1pt] (1) to (2);
\draw [->, line width=1pt] (1) to (3);
\draw [->, line width=1pt] (2) to (3);
\draw [->, line width=1pt] (1) to (5);
\draw [->, line width=1pt] (2) to (5);
\draw [->, line width=1pt] (3) to (5);
\draw [->, line width=1pt] (2) to (6);
\draw [->, line width=1pt] (6) to (4);

\end{tikzpicture}
\subcaption{DAG $G$}
\end{minipage}
\begin{minipage}[c]{0.48\textwidth}
\centering
\begin{tikzpicture}
[node1/.style={circle,fill=white,draw=brown,minimum size=2 em,inner sep=2pt, line width=1pt},
node2/.append style={circle,fill=white,draw=blue,minimum size=2 em,inner sep=2pt, line width=1pt},
node3/.append style={circle,fill=white,draw=purple,minimum size=2 em,inner sep=2pt, line width=1pt},
node4/.append style={circle,fill=white,draw=green,minimum size=2 em,inner sep=2pt, line width=1pt},
node5/.append style={circle,fill=white,draw=yellow,minimum size=2 em,inner sep=2pt, line width=1pt}
]
\node[node1] (1) at (-1, 1.732) {$1$};
\node[node1] (2) at (1, 1.732) {$2$};
\node[node2] (3) at (-2, 0) {$3$};
\node[node3] (4) at (2, 0) {$4$};
\node[node4] (5) at (-1, -1.732) {$5$};
\node[node5] (6) at (1, -1.732) {$6$};

\draw [->, line width=1pt] (1) to (2);
\draw [->, line width=1pt] (1) to (3);
\draw [->, line width=1pt] (2) to (3);
\draw [->, line width=1pt] (1) to (5);
\draw [->, line width=1pt] (2) to (5);
\draw [-, line width=1pt, color=red] (3) to (5);
\draw [->, line width=1pt] (2) to (6);
\draw [->, line width=1pt, color=red] (6) to (4);

\end{tikzpicture}
\subcaption{CPDAG $G'$}
\end{minipage}
\caption{A DAG and the corresponding CPDAG, under a fixed partition.}
\label{CPDAGeg}
\end{figure}
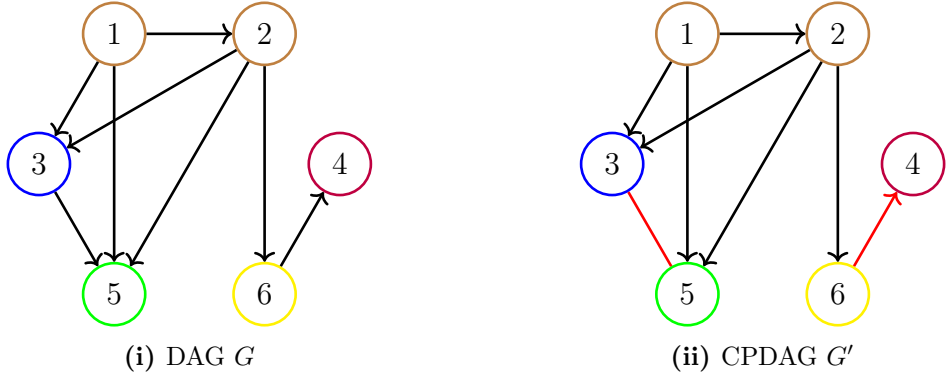

\end{example}

\section{Greedy search}\label{sec_gs}
\subsection{Likelihood inference}

Let $\mathbf{X}=(X_1,\dots, X_p)^T\in\mathbb{R}^{p\times n}$ be a data matrix comprised of $n$ observations for each one of the $|V|=p$ considered variables.  The columns of $\mathbf{X}$ are assumed to be generated as an i.i.d.~sample from a joint multivariate normal distribution.  Without loss of generality, we may assume the mean vector of the normal distribution to be zero. (Otherwise, we may estimate the means by sample means and recenter each row of the data matrix.) Define the sample covariance matrix $S=\mathbf{X}\mathbf{X}^T/n$.
For a fixed DAG $G=(V,E)$ and partition $\Pi$ of $V$, the partially homoscedastic linear Gaussian model given by $(G,\Pi)$ has log-likelihood function
\begin{align}
\ell_{G}(\Lambda,\boldsymbol{\omega})&=\frac{n}{2}\left(-\log \det(\diag(\boldsymbol{\omega}))+\log \det(I-\Lambda)^{2} -\tr\left\{(I-\Lambda) \diag(\boldsymbol{\omega})^{-1}(I-\Lambda)^T S\right\}\right).\nonumber 
\end{align}
In order to compute the maximum likelihood estimates that maximize the function, one may solve a sequence of linear regression problems, one problem for each node/variable with predictors given by parent variables.

Let $\Pi=\{\pi_1,\dots,\pi_K\}$.  Then 
 \begin{align}
\ell_{G}(\Lambda,\boldsymbol{\omega})
&=
\frac{n}{2}\sum_{k=1}^K\left(-|\pi_k|\log\omega_k-\frac{1}{n\omega_k}\left(\sum_{i\in\pi_k}\left\|X_i-\Lambda_{i,\pa(i)}^T X_{\pa(i)}\right\|^2\right)\right)\nonumber \\
&:=\frac{n}{2}\sum_{k=1}^K \ell_{G,\pi_k}(\Lambda,\omega_k),
\end{align}
which is the sum of log-likelihood values of the $K$ blocks.  We deduce that the maximum log-likelihood is achieved at the pair $(\widehat{\Lambda},\widehat{\boldsymbol{\omega}})$ with 
\begin{align*}
\widehat{\Lambda}_{\pa(i),i}&=\argmin_{\beta\in\RR^{|\pa(i)|}}\|X_i-\beta^T X_{\pa(i)}\|^2,\\
\widehat{\omega}_k&=\frac{\sum_{i\in\pi_k}\left\|X_i-\widehat{\Lambda}_{\pa(i),i}^TX_{\pa(i)}\right\|^2}{n|\pi_k|}.
\end{align*}

In order to solve the problem of selecting the DAG $G$, we appeal to information criteria.  Plugging the maximum likelihood estimate $(\widehat{\Lambda},\widehat{\boldsymbol{\omega}})$ back into the log-likelihood function, we can compute the Bayesian information criterion (BIC) score for the DAG $G$ given the data $\mathbf{X}$.  This score decomposes into the sum of scores of each block:
\begin{align}\label{bic}
s_{\text{BIC}}(G)&=\frac{1}{n}\left(\ell_{G}(\widehat{\Lambda},\widehat{\boldsymbol{\omega}})-\frac{\log(n)}{2}|E|\right)\nonumber\\
&=\frac{1}{2}\sum_{k=1}^K\left(-|\pi_k|\log\hat{\omega}_k-|\pi_k|-\frac{\log(n)}{n}\sum_{i\in\pi_k}|\pa(i)|\right).
\end{align}

\subsection{Search scheme}\label{search}
For model selection, we maximize the BIC score over the space of DAGs.  The number of possible DAGs grows very quickly with the number of nodes $p$; e.g., we have
$1.2\times 10^{15}$ for $p=10$ \citep{oeis}.
We thus follow
prior work and adopt a greedy search algorithm, which starts at some initial random or empty DAG and selects the DAG with highest BIC score in the local neighborhood at each step.  The procedure terminates when the considered DAG has higher BIC score than all other DAGs in the local neighborhood.  Here, we define the local neighborhood of a DAG $G$ as the collection of all DAGs that can be obtained from $G$ by one edge addition, removal or reversal. 

An edge addition or removal always changes the equivalence class of a DAG.
Whether an edge reversal creates a DAG in a different equivalence class is determined by parents and partition information, as specified in our previous results.  We can thus search DAGs that are in the neighborhood and in different equivalence classes. 
To speed up the search we also restrict the local neighborhood to a random subset with size bound.
To relieve issues of local optima, we perform the greedy search multiple times starting from different initial DAGs. In this work, for each realization of the greedy search, we restart the method 5 times with neighborhood size bound $k=300$.  We refer to this scheme as GEV.

\subsection{Simulation study}\label{sec_sim}
We investigate the numerical performance of our algorithm (GEV) by comparing against the greedy equivalence search (GES) \citep{chickering02b} and the PC-algorithm \citep{spirtes:2000}. The former tries to find the structure with maximum $l_0$-penalized log-likelihood and the default penalty is $\log(n)/2n$, corresponding to the BIC score. The latter has a significance level $\alpha$ for conditional independence tests that determine edges. To make the score-based and the constraint-based methods comparable, we consider a grid of values for $\alpha$ from $10^{-5}$ to $0.8$, increasing by the ratio $1.1$ \citep{PCnonpara}. Then we can choose the value of $\alpha$ according to the maximum BIC score. 

Both PC and GES algorithm do not incorporate any possible (partial) homoscedasticity and return a standard Markov equivalence class. In our partially homoscedastic model setup, the greedy search is performed among DAGs and returns the CPDAG of the final DAG, as described in Section \ref{search}. Since the parental information (or edge directions) plays an important role in our idenfitiability result, we adopt the modified structural Hamming distance (SHD) in \citet{Peters2014biometrika} as the error measurement. The classic SHD is the number of edge additions, deletions and reversals in order to transform one graph to another graph, i.e., all edge mistakes count as 1, while the modified version assigns a distance of 2 to each pair of reversed edges.

In the simulation we use 24 different configurations of $(p,n,sp)$, where $p\in\{5,10,20,40\}$ is the number of nodes, $n\in\{100,500,1000\}$ is the sample size and $sp\in\{\textit{sparse}, \textit{dense}\}$ controls the sparsity of randomly generated DAGs. In the sparse setting, each pair of nodes has the adjacency probability $\textit{prob}=3/(2p-2)$, while in the dense setting the probability is $0.3$. For each $(i,j)$ pair with $i<j$, we simulate independent uniform random variables $U_{ij}\sim U(0,1)$. If $U_{ij}<\textit{prob}$, the edge $i\rightarrow j$ is introduced. Every edge weight is uniformly drawn from $[-1,-0.3]\cup [0.3,1]$,
and the error variance of each partition block is uniformly drawn from $[0.3,1]$. After traversing all node pairs, we randomly permute the node labels. For each configuration we run the simulation 100 times.

The box-plots that follow show the SHD between the true CPDAG and the estimated CPDAG obtained by the considered methods.  We study the case of 2  partition blocks as well as a more subtle case with $\lceil p/3\rceil+1$ blocks.   In the former case, the simulation experiments summarized in Figures \ref{fig:box-p/3-s} and \ref{fig:box-p/3-d} show that the greedy search algorithm for partially homoscedastic models is able to very effectively exploit the available homoscedasticity in both sparse and dense settings as well as across all three sample sizes and four dimensions.  Its SHDs are consistently lower as for GES and PC.  This said, the dense is clearly far more challenging than the sparse---as is to be expected.  Moreover, the simulations confirm the intuition that the SHDs should be smaller if extra equal error variances information is utilized by the algorithm.  For the larger number of blocks, Figure~\ref{fig:box-2-s} shows the same pattern of clearly better performance.  However, in the dense case depicted in Figure~\ref{fig:box-2-d}) one now sees the problem becoming difficult in the highest-dimensional case where the PC algorithm shows best performance.

\begin{figure}[tbp] 
\center
\includegraphics[scale=0.42]{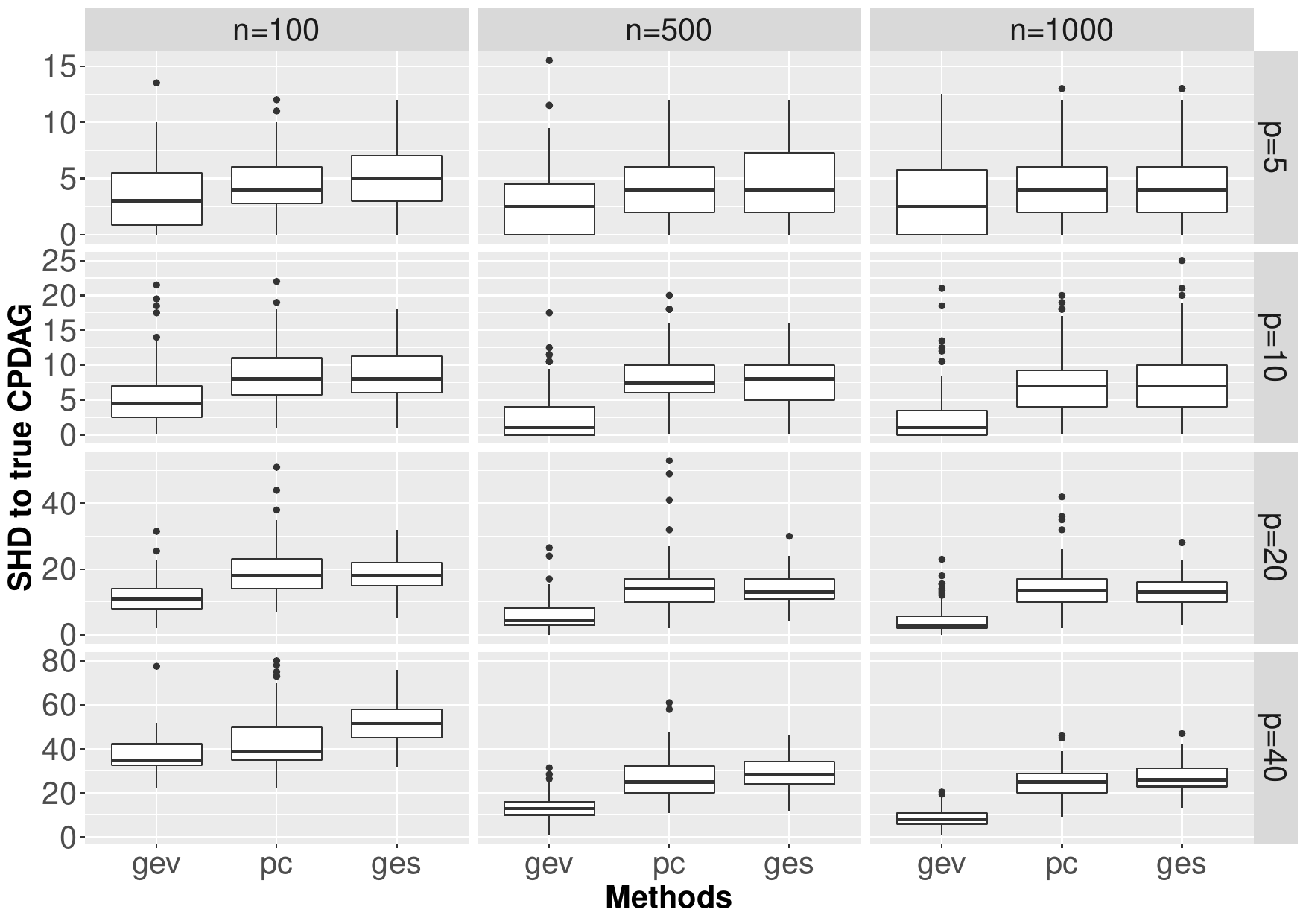}
\caption{Box-plots of SHD by groups of $p$ and $n$, sparse graphs $2$ blocks.}
\label{fig:box-2-s}
\end{figure}

\begin{figure}[tbp] 
\center
\includegraphics[scale=0.42]{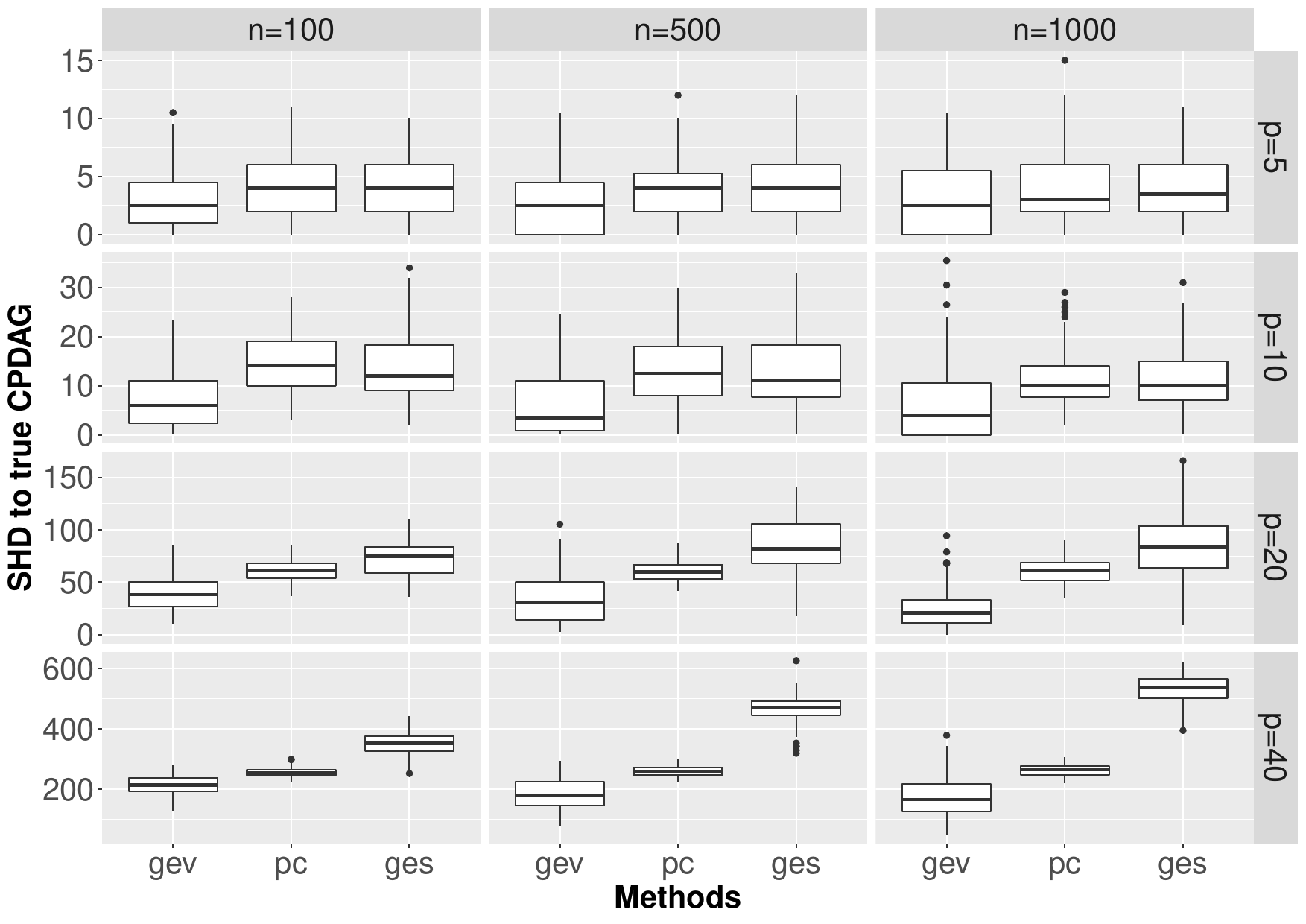}
\caption{Box-plots of SHD by groups of $p$ and $n$, dense graphs, $2$ blocks.}
\label{fig:box-2-d}
\end{figure}

\begin{figure}[tbp] 
\center
\includegraphics[scale=0.42]{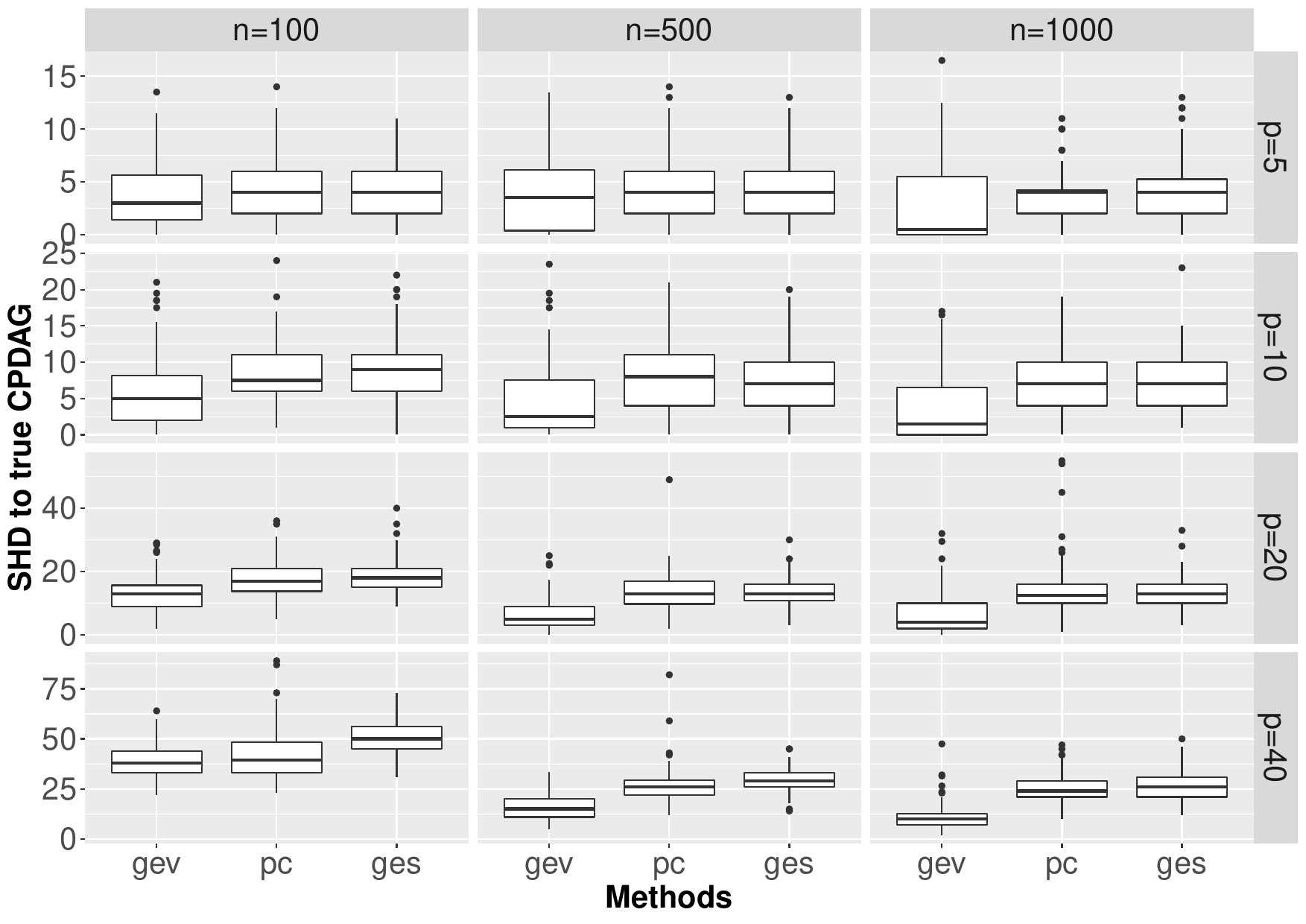}
\caption{Box-plots of SHD by groups of $p$ and $n$, sparse graphs, $\lceil p/3 \rceil +1$ blocks.}
\label{fig:box-p/3-s}
\end{figure}

\begin{figure}[tbp] 
\center
\includegraphics[scale=0.42]{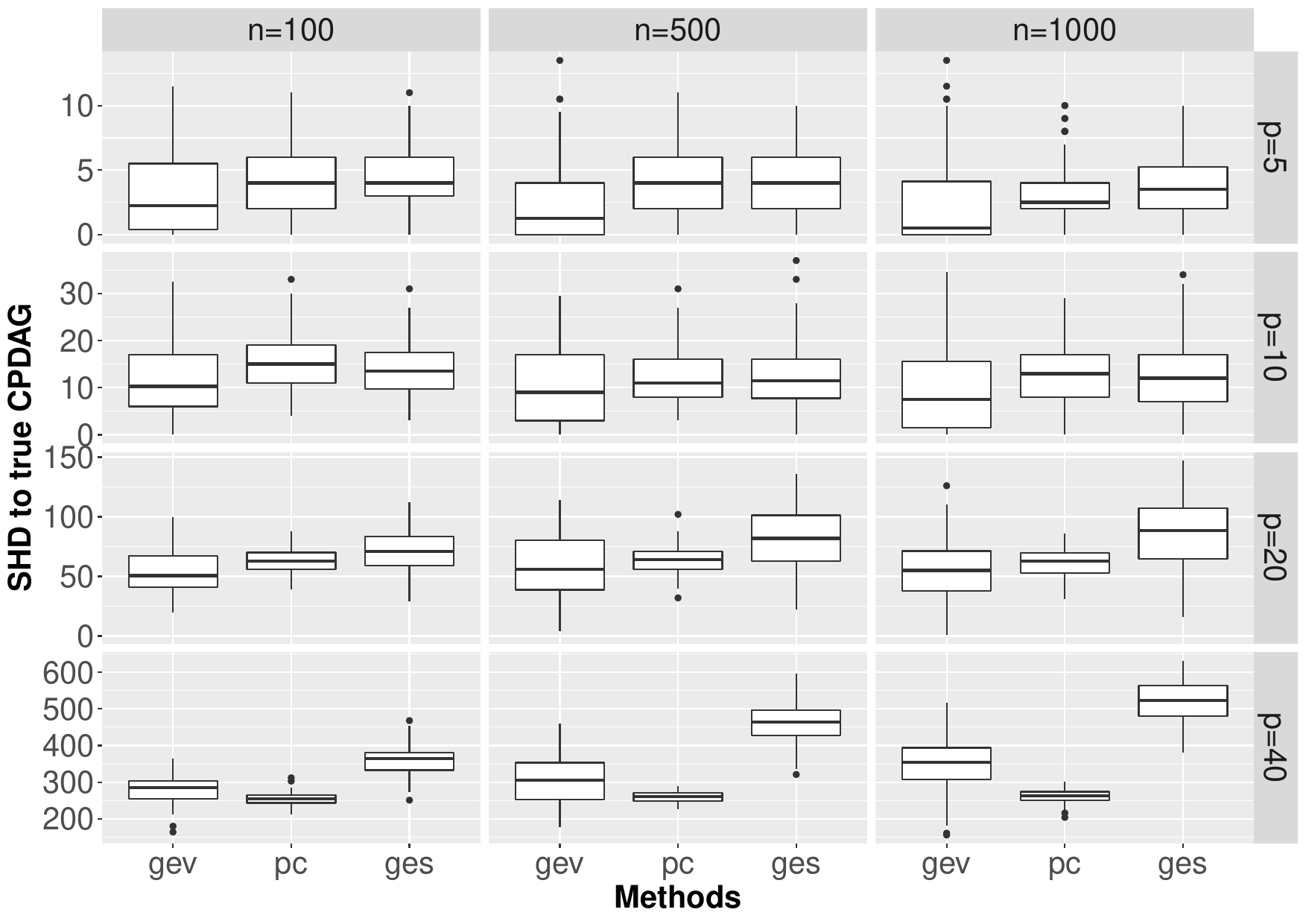}
\caption{Box-plots of SHD by groups of $p$ and $n$, dense graphs, $\lceil p/3 \rceil +1$ blocks.}
\label{fig:box-p/3-d}
\end{figure}

\section{Discussion}
\label{sec_discuss}

The framework of partially homoscedastic linear Gaussian models is a generalization of linear SEMs with equal error variances.  It encodes equal variance assumption through a partition of the variables.  The framework  unifies the classical setting in which the error variances may be arbitrary and the  equal error variance setup that has been studied in recent literature.  These two cases are captured by the two extreme partitions,  with a single block and all variables in separate blocks, respectively. 

Each partially homoscedastic linear model can be characterized algebraically via conditional independence constraints and equal variance constraints.  The former are well known from the classical graphical model perspective on linear SEMs, and we explicitly derived the latter in this paper. The equal variance constraints  reveal the essence of how equal variance assumptions lead to identifiability of edge orientations.  This perspective differs from previous work on the equal variance assumption which, e.g., considered ordering variances \citep[e.g.][]{chen2018causal}.
We also showed how equivalance classes in the partially homoscedastic setting are naturally represented by a refined CPDAG, which may be constructed efficiently with the help of existing results on CPDAGs in setting with background knowledge.
For model selection, we demonstrated that greedy search provides an effective tool to exploit knowledge about partial homoscedasticity.

\iffund \section*{Acknowledgements} This project has received funding from the European Research Council (ERC) under the European Union’s Horizon 2020 research and innovation programme (grant agreement No 883818). \fi

\section*{References}
\renewcommand{\bibsection}{}
\bibliography{ref}

\appendix
\section{Appendix}
\subsection{Proof of Theorem \ref{iden_thm}}\label{A1}
\begin{proof}
The ``if'' direction is given by Corollary \ref{wij_eq_col}. For the ``only if'' direction, we distinguish several cases for the set $A_i$. The arguments for the corresponding different cases of $A_j$ are analogous.  In each case, we construct a set of parameters such that the considered rational equation in \eqref{wij_eq} does not hold.

\begin{enumerate}[label=\alph*)]
\item $\exists\ k\in\pa(i)\backslash A_i$: We choose $\lambda_{ki}\not=0$ and set all other edge weights equal to zero.  Then since $k\notin A_i$, the trek rule implies that $\Sigma_{i, A_i}=0$.  Hence \eqref{wij_eq} yields that
\[
\sigma_{ii} = \sigma_{jj}-\Sigma_{j,A_j}(\Sigma_{A_j,A_j})^{-1}\Sigma_{A_j,j} \le \sigma_{jj}.
\]
By the trek rule, it further holds that $\sigma_{jj}=\omega_{jj}$ and $\sigma_{ii}=\omega_{ii}+\lambda_{ki}^2\omega_{kk}$.  We arrive at the following contradiction:
\[
\sigma_{ii} \le \sigma_{jj} = \omega_{jj} = \omega_{ii} < \omega_{ii}+\lambda_{ki}^2\omega_{kk} = \sigma_{ii}.
\]
We conclude that $\pa(i)\subseteq A_i$.
\item $\exists\ k\in \de(i)\cap A_i$: 
There is then a directed path from $i$ to $k$ and as in the proof of Theorem \ref{wii-rev_thm}, we assume that $k$ was chosen such that this path is ``minimial''. In other words, the directed path is of the form $i\rightarrow m_1\rightarrow \cdots \rightarrow m_t\rightarrow k$ with $m_1,\dots,m_t\notin A_i$.  We proceed by distinguishing three subcases (illustrated in Figure~\ref{RT}):

\begin{figure}[htbp]
\centering
\label{fig:box}
\begin{minipage}[c]{0.3\textwidth}
\centering
\begin{tikzpicture}
[main_node/.style={draw=none,fill=none}]
\node[main_node] (1) at (-2.4, 3.4) {$i$};
\node[main_node] (2) at (-2.4, 2.4) {$m_1$};
\node[main_node] (5) at (-0.8, 0.8) {$m_{t-1}$};
\node (4) at ($(2)!.5!(5)$) {$\ldots$};
\node[main_node] (6) at (-0, 0) {$m_t$};
\node[main_node] (7) at (0, -1) {$k$};
\node[main_node] (8) at (-2, 0) {\textcolor{red}{$j$}};
\node[main_node] (9) at (-2.8, -0.8) {$\ldots$};

\draw [->, line width=1pt] (1) to (2);
\draw [->, line width=1pt] (2) to (4);
\draw [->, line width=1pt] (4) to (5);
\draw [->, line width=1pt] (5) to (6);
\draw [->, line width=1pt] (6) to (7);
\draw [->, line width=1pt, color=red] (8) to (9);
\end{tikzpicture}
\subcaption{}
\end{minipage}
\centering 
\label{fig:triangle}
\begin{minipage}[c]{0.3\textwidth}
\centering
\begin{tikzpicture}
[main_node/.style={draw=none,fill=none}]
\node[main_node] (1) at (-2.4, 3.4) {$i$};
\node[main_node] (2) at (-2.4, 2.4) {$m_1$};
\node[main_node] (5) at (-0.8, 0.8) {\textcolor{red}{$j$}};
\node (4) at ($(2)!.5!(5)$) {$\ldots$};
\node[main_node] (6) at (0, 0) {$\ldots$};
\node[main_node] (7) at (0.8, -0.8) {$m_t$};
\node[main_node] (8) at (2, -0.8) {$k$};
\node[main_node] (9) at (-0.8, -0.8) {$\ldots$};
\node[main_node] (10) at (-2, -0.8) {$k'$};

\draw [->, line width=1pt] (1) to (2);
\draw [->, line width=1pt] (2) to (4);
\draw [->, line width=1pt] (4) to (5);
\draw [->, line width=1pt, color=blue] (5) to (6);
\draw [->, line width=1pt] (6) to (7);
\draw [->, line width=1pt] (7) to (8);
\draw [->, line width=1pt,color=red] (6) to (9);
\draw [->, line width=1pt,color=red] (9) to (10);

\end{tikzpicture}
\subcaption{}
\end{minipage}
\centering 
\label{fig:triangle2}
\begin{minipage}[c]{0.3\textwidth}
\centering
\begin{tikzpicture}
[main_node/.style={draw=none,fill=none}]
\node[main_node] (1) at (-2.4, 3.4) {$i$};
\node[main_node] (2) at (-2.4, 2.4) {$m_1$};
\node[main_node] (5) at (-0.8, 0.8) {\textcolor{red}{$?$}};
\node (4) at ($(2)!.5!(5)$) {$\ldots$};
\node[main_node] (6) at (-0, 0) {\textcolor{red}{$j$}};
\node[main_node] (7) at (0.8, -0.8) {$\ldots$};
\node[main_node] (8) at (2, -0.8) {$k$};

\draw [->, line width=1pt] (1) to (2);
\draw [->, line width=1pt] (2) to (4);
\draw [->, line width=1pt] (4) to (5);
\draw [->, line width=1pt, color=red] (5) to (6);
\draw [->, line width=1pt] (6) to (7);
\draw [->, line width=1pt] (7) to (8);

\end{tikzpicture}
\subcaption{}
\end{minipage}
\caption{The three subcases when there exists a node $k\in\de(i)\cap A_i$.}
\label{RT}
\end{figure}
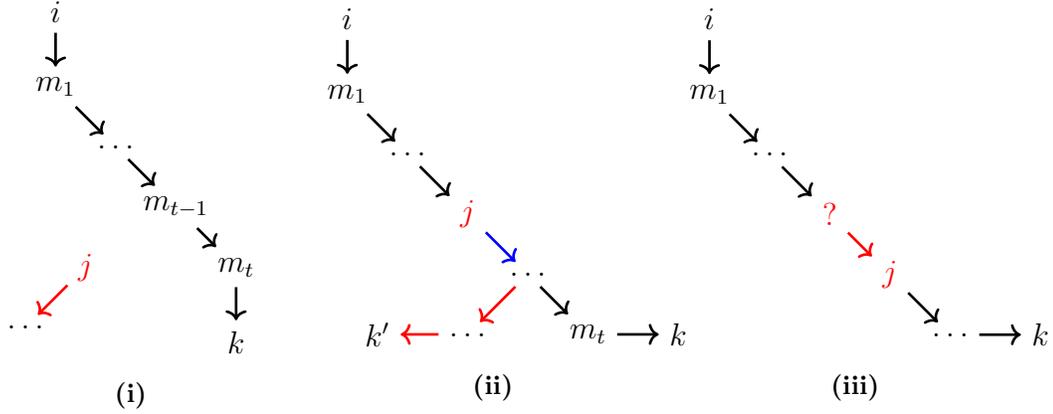

\begin{enumerate}[label=(\roman*)]
\item 
Suppose $k$ can be chosen such that there exists a directed path from $i$ to $k$ that is minimal in the above sense and does not intersect $j$.  Then we can set all edge weights zero except those on the path. As in the proof of Theorem \ref{wii-rev_thm}, we have $\sigma_{ii}=\omega_{ii}=\omega_{jj}=\sigma_{jj}$ and  find a contradiction because under equation~\eqref{wij_eq}, 
\begin{align*}
\omega_{ii}=\sigma_{ii}-\Big(\lambda_{im_1}\lambda_{m_tk}\prod_{s=2}^t\lambda_{m_{s-1}m_s}\Big)^2\frac{1}{\sigma_{kk}}<\sigma_{jj}=\omega_{jj}.
\end{align*}
\item Next, consider the case where every minimal directed path from $i$ to a node $k\in\de(i)\cap A_i$ contains the node $j$ and where in addition $A_j\cap \de(j)\not=\emptyset$.  Let $k'\in\de(j)\cap A_j$.  Then there exists a directed path from $j$ to $k'$.  It follows that in this subcase $j$ must be in $\de(i)$.  Since the graph is a DAG, the considered directed path from $j$ to $k'$ may not contain $i$.  Hence, we encounter exactly the situation of subcase (i), but with the role of $i$ and $j$ switched.  Hence, also in this case we can construct a counterexample to equation~\eqref{wij_eq}.
\item The remaining subcase is that every minimal directed path from $i$ to a node $k\in\de(i)\cap A_i$ contains the node $j$, and that these paths intersect $A_j$ only after they have visited $j$.  Select one such minimal directed path.  If the node preceding $j$ on the path is not in $A_j$, we can reduce the problem to case (a) by switching $i$ and $j$ ($\pa(j)\backslash A_j\neq\emptyset$). Otherwise, we set all edge weights zero except those on the considered minimal path. Let $A_j'$ be the intersection of $A_j$ and the nodes on the path. In the new DAG with only edges in the directed path, the set $A_j'$ satisfies that $\pa(j)\subseteq A_j'\subseteq V\backslash \de(j)$, and thus
\begin{align*}
\omega_{jj}=\sigma_{jj}-\Sigma_{j,A_j'}(\Sigma_{A_j',A_j'})^{-1}\Sigma_{A_j',j}=\sigma_{jj}-\Sigma_{j,A_j}(\Sigma_{A_j,A_j})^{-1}\Sigma_{A_j,j}.   
\end{align*}
However, computing the left hand side of \eqref{wij_eq} leads to a strict inequality.
\begin{align*}
\sigma_{ii}-\Sigma_{i,A_i}(\Sigma_{A_i,A_i})^{-1}\Sigma_{A_i,i}&=\omega_{ii}-\Big(\lambda_{im_1}\lambda_{m_tk}\prod_{s=2}^t\lambda_{m_{s-1}m_s}\Big)^2\frac{1}{\sigma_{kk}}<\omega_{ii}\\
&=\omega_{jj}=\sigma_{jj}-\Sigma_{j,A_j}(\Sigma_{A_j,A_j})^{-1}\Sigma_{A_j,j}. 
\qedhere
\end{align*}
\end{enumerate}
\end{enumerate}
\end{proof}

\subsection{``Only if'' part of Proposition \ref{ci_sndcomp}}\label{A2}
\begin{proof}
If $i$ and $j$ are $d$-connected given $S$, then there exists a path $q$ between $i$ and $j$, on which every collider is in $S$ (recall that our convention allows a path to visit the same node more than once).  We denote the set of all these colliders by $S'=\{z_1,z_2,\dots,z_k\}\subseteq S$; see Figure~\ref{fig:active:path} for an illustration.  
In order to form a covariance matrix in $M_{G,\Pi}$, we assign the same weight $\rho\in(0,1)$ to all edges of the path $q$ and set  all other edge weights zero.  We set all error variances $\omega_{ii}=1$.  Let $\Lambda$ and $\boldsymbol{w}$ be the resulting choice of parameters, and let $\Sigma=\phi_G(\Lambda,\boldsymbol{\omega})$ the associated covariance matrix.   

\begin{figure}[htbp]
\centering
\begin{tikzpicture}
[main_node/.style={draw=none,fill=none}]
\node[main_node] (1) at (0, 0) {$i$};
\node[main_node] (2) at (1.2, 0) {$\ldots$};
\node[main_node] (3) at (2.4, 0) {$h_1$};
\node[main_node] (4) at (2.4, -1.2) {$\vdots$};
\node[main_node] (5) at (2.4, -2.4) {$z_1$};

\node[main_node] (6) at (3.6, 0) {$\ldots$};
\node[main_node] (7) at (4.8, 0) {$h_2$};
\node[main_node] (8) at (4.8, -1.2) {$\vdots$};
\node[main_node] (9) at (4.8, -2.4) {$z_2$};

\node[main_node] (10) at (6, 0) {$\ldots$};
\node[main_node] (11) at (7.2, 0) {$\ldots$};
\node[main_node] (12) at (8.4, 0) {$h_k$};
\node[main_node] (13) at (8.4, -1.2) {$\vdots$};
\node[main_node] (14) at (8.4, -2.4) {$z_k$};

\node[main_node] (15) at (9.6, 0) {$\ldots$};
\node[main_node] (16) at (10.8, 0) {$j$};

\draw [->, line width=1pt] (1) to (2);
\draw [->, line width=1pt] (2) to (3);
\draw [->, line width=1pt] (3) to (4);
\draw [->, line width=1pt] (4) to (5);

\draw [<-, line width=1pt] (3) to (6);
\draw [->, line width=1pt] (6) to (7);
\draw [->, line width=1pt] (7) to (8);
\draw [->, line width=1pt] (8) to (9);

\draw [<-, line width=1pt] (7) to (10);
\draw [->, line width=1pt] (11) to (12);
\draw [->, line width=1pt] (12) to (13);
\draw [->, line width=1pt] (13) to (14);

\draw [->, line width=1pt] (15) to (12);
\draw [->, line width=1pt] (16) to (15);
\end{tikzpicture}
\caption{\label{fig:active:path} The active path $q$.}
\end{figure}
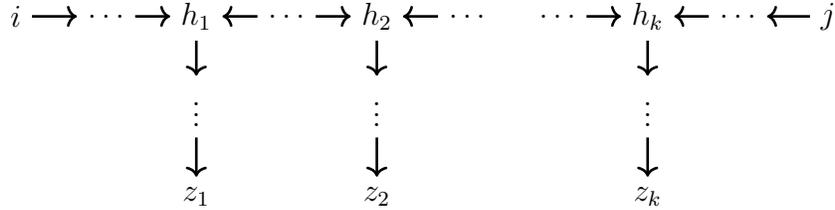

By the trek rule, the diagonal entries of $\Sigma=(\sigma_{kl})$ satisfy that 
\begin{align*}
   \sigma_{ii}&=\sigma_{jj}=1\quad\text{and}\quad \sigma_{kk}=1\; \forall\, k\notin S\setminus S',
\end{align*}
because the fact that $i$ and $j$ are $d$-connected given $S$ implies that the only nodes that are both in $S$ and on the path $q$ are the colliders in the set $S'$.  Next, notice that there exists a unique nonzero trek between each pair of consecutive nodes in the sequence $i\equiv z_0,z_1,z_2,\dots, z_k, z_{k+1}\equiv j$.  Let $r_t$ be the number of edges on the segment of $q$ that goes from $z_t$ to $z_{t+1}$.  By the trek rule, for all $t=0,\dots,k$,
\begin{align*}
    \sigma_{z_t,z_{t+1}} &= \rho^{r_t}.  
\end{align*}
 Ordering the nodes as $i,z_1,\dots, z_k,j$ followed by the nodes in $S\setminus S'$, we obtain that 
\begin{align}
\label{eq:Sigma'}
\Sigma_{ijS,ijS}=
\begin{pmatrix}
\begin{array}{cccccc|c}
1 & \rho^{r_0} & 0  & \cdots & 0 & 0 &\\
\rho^{r_0} & \sigma_{z_1,z_1} & \rho^{r_1}  & \cdots & 0 & 0 &\\
0 & \rho^{r_1} & \sigma_{z_2,z_2}  & \ddots  & 0 & 0 &\\
\vdots & \vdots & \ddots  & \ddots & \ddots & \vdots & O\\
0 & 0 & 0 & \ddots & \sigma_{z_k,z_k} & \rho^{r_k} & \\
0 & 0 & 0 & \cdots & \rho^{r_k} & 1 & \\ \hline
  &   & O & & & & I_{S\backslash S'}\\
\end{array}
\end{pmatrix}.
\end{align}
Now observe that $\det(\Sigma_{iS,jS}) = \rho^{\sum_{t=0}^k r_t}\not=0$.
\end{proof}

\subsection{Proof of Theorem \ref{alg_thm}}\label{A3}
\begin{proof}
Algorithm \ref{alg:1} builds upon the work of \citet{meek1995causal} who shows how to construct the CPDAG of an equivalence class when provided a set of conditional independence relations and arbitrary background knowledge about the edge orientations.  His general algorithm first constructs the classical CPDAG by reading off unshielded colliders and propagating rules R1, R2, R3.  Next, the general algorithm iteratively adds each edge from background knowledge and applies all rules R1, R2, R3, R4 to the 1-edge changes. Theorems 2-4 in \citet{meek1995causal} prove the correctness of the general algorithm. 

The application of R1-R3 before inserting background knowledge creates the classical CPDAG for known conditional independence relations and without extra information (it is the CPDAG under partition $\Pi_{\min}=\{\{i\}:i\in V\}$). In our setup, we start with a DAG $G$ in the equivalence class
and determine directly the skeleton and unshielded colliders and the classical CPDAG via rules R1-R3.

The partial homoscedasticity encoded in the given partition $\Pi$ now provides special `background knowledge' that fixes the orientation of all the edges with one endpoint at  special nodes.  As we show in the remainder of this proof, when we insert this special knowledge into the classical CPDAG, the situations of R3 and R4 in \citet{meek1995causal} cannot arise.  It thus suffices to apply only R1 and R2, and we can insert all the background knowledge simultaneously, because we know that all extra information is compatible and the desired CPDAG always exists.

For our proof of the correctness of the simplifications in Algorithm \ref{alg:1} over Meek's general procedure, recall that the equal variance constraints give the adjacency directions of all nodes whose block has size at least 2. The set $\textbf{R}$ consists of edges incident to these nodes, and the set $\textbf{F}$ consists of the reversal of the edges in $\textbf{R}$.  We then argue as follows.

\begin{enumerate}[label=(\roman*)]
\item First, we know there is at least one DAG in the equivalence class, so the general algorithm will not fail. That means the background knowledge check S1 and S1$'$ are redundant. We can just iteratively perform S2, S3 and S4 and obtain the same result.
\item Next, notice that we can add all edges in $\textbf{R}$ simultaneously and close the orientations sequentially. Indeed, every newly oriented edge is dependent on some of the background knowledge. As long as all dependencies are added, the edge will be oriented without conflicts. Either adding edges sequentially or simultaneously would finally cover all dependencies of each orientable edge, and results in the same final output.
\item Finally, we claim that only the rules R1 and R2 become applicable in the orientation propagation step S3 of our algorithm. 
Indeed, there is an unshielded collider triple in R3, but the propagation with background knowledge does not make any new collider triples, otherwise the output CPDAG cannot have same conditional independence statements as the DAG itself. Hence, any pattern of R3 must have been obtained in the initial phase of constructing the classical CPDAG, and will not appear in the last propagation phase. 

\begin{figure}[htbp]
\begin{minipage}[c]{0.48\textwidth}
\centering
\begin{tikzpicture}
[main_node/.style={draw=none,fill=none, inner sep=1.5pt}]
\node[main_node] (1) at (1, 0) {$i_1$};
\node[main_node] (2) at (2, 1) {$i_2$};
\node[main_node] (3) at (3, 0) {$i_3$};
\node[main_node] (4) at (2, -1) {$i_4$};
\node[main_node] (5) at (4.414, 0) {$l$};

\draw [-, line width=1pt] (1) to (2);
\draw [-{Stealth}, line width=1pt] (4) to (1);
\draw [-, line width=1pt] (2) to (3);
\draw [-{Stealth}, line width=1pt] (3) to (4);
\draw [-, line width=1pt] (2) to (4);
\draw [-{Stealth}, line width=1pt, color=red] (5) to (3);
\draw [-{Stealth}, line width=1pt, color=red, dashed] (5) to (2);
\draw [-{Stealth}, line width=1pt, color=red, dashed] (5) to (4);
\end{tikzpicture}
\caption{$i_3\rightarrow i_4$ from R1.}
\label{byR1}
\end{minipage}
\begin{minipage}[c]{0.48\textwidth}
\centering
\begin{tikzpicture}
[main_node/.style={draw=none,fill=none, inner sep=1.5pt}]
\node[main_node] (1) at (1, 0) {$i_1$};
\node[main_node] (2) at (2, 1) {$i_2$};
\node[main_node] (3) at (3, 0) {$i_3$};
\node[main_node] (4) at (2, -1) {$i_4$};
\node[main_node] (5) at (4.414, 0) {$l$};

\draw [-, line width=1pt] (1) to (2);
\draw [-{Stealth}, line width=1pt] (4) to (1);
\draw [-, line width=1pt] (2) to (3);
\draw [-{Stealth}, line width=1pt] (3) to (4);
\draw [-, line width=1pt] (2) to (4);
\draw [-{Stealth}, line width=1pt, color=red] (3) to (5);
\draw [-{Stealth}, line width=1pt, color=red, dashed] (5) to (2);
\draw [-{Stealth}, line width=1pt, color=red] (5) to (4);
\end{tikzpicture}
\caption{$i_3\rightarrow i_4$ from R2.}
\label{byR2}
\end{minipage}
\end{figure}

For R4, consider the first time that its pattern appears in the propagation phase. The orientation $i_3\rightarrow i_4$ is not obtained in the classical CPDAG phase as otherwise $i_4\rightarrow i_1$ would have also been oriented and the pattern of R4 appears in the classic CPDAG phase, which is a contradiction. If $i_3\rightarrow i_4$ results from background knowledge directly, then we know the orientations of all adjacencies of either $i_3$ or $i_4$, which will orient $i_2-i_3$ or $i_2-i_4$. This is a contradiction. Figure \ref{byR1} depicts the case of $i_3\rightarrow i_4$ obtained from R1: unshielded triple $l\rightarrow i_3 - i_4$. The edge $l\rightarrow i_2$ must exist to keep $i_2-i_3$ not oriented, consequently the undirected edge $i_2-i_4$ implies the adjacency between $l$ and $i_4$. The triple $(l, i_3, i_4)$ is shielded, contradicting the pattern of R1. Figure \ref{byR2} illustrates the case of $i_3\rightarrow i_4$ obtained from R2. To keep $i_2-i_4$ not oriented, the edge $l\rightarrow i_2$ must exist. But then $i_2-i_3$ can be oriented as $i_3\rightarrow i_2$, which is again a contradiction.
\end{enumerate}

In conclusion, we have proved that our modification to the general algorithm for equal variance constraints background knowledge is correct.
\end{proof}

\end{document}